\newtheorem{theorem}{Theorem}
\newtheorem{corollary}[theorem]{Corollary}
\newtheorem{lemma}[theorem]{Lemma}
\newtheorem{proposition}[theorem]{Proposition}
\newcommand\TT{{\cal T}}
\newcommand\PP{{\cal P}}
\DeclareTextCompositeCommand{\v}{OT1}{l}{l\nobreak\hspace{-.1em}'}
\DeclareTextCompositeCommand{\v}{OT1}{t}{t\nobreak\hspace{-.1em}'\nobreak\hspace{-.15em}}
\begin{document}
\title{Twin-width of graphs on surfaces\thanks{The first two authors have been supported by the MUNI Award in Science and Humanities (MUNI/I/1677/2018) of the Grant Agency of Masaryk University. The third author is supported by the Slovenian Research Agency (research program P1-0383, research projects J1-3002, J1-4008).}}

\author{Daniel Kr{\'a}\v{l}\thanks{Faculty of Informatics, Masaryk University, Botanick\'a 68A, 602 00 Brno, Czech Republic. E-mails: {\tt dkral@fi.muni.cz} and {\tt kristyna.pekarkova@mail.muni.cz}.}\and
\newcounter{lth}
\setcounter{lth}{2}
        Krist\'yna Pek\'arkov\'a$^\fnsymbol{lth}$\and
	Kenny \v Storgel\thanks{Faculty of Information Studies in Novo mesto, Ljubljanska cesta 31a, 8000 Novo mesto, Slovenia. E-mail: {\tt kennystorgel.research@gmail.com}.}}
\date{} 
\maketitle

\begin{abstract}
Twin-width is a width parameter introduced by Bonnet, Kim, Thomass\'e and Watrigant [FOCS'20, JACM'22],
which has many structural and algorithmic applications.
We prove that the twin-width of every graph embeddable in a surface of Euler genus $g$
is at most $18\sqrt{47g}+O(1)$, which is asymptotically best possible as
it asymptotically differs from the lower bound by a constant multiplicative factor.
Our proof also yields a quadratic time algorithm to find a corresponding contraction sequence.
To prove the upper bound on twin-width of graphs embeddable in surfaces,
we provide a stronger version of the Product Structure Theorem for graphs of Euler genus $g$ that
asserts that every such graph is a subgraph of the strong product of a path and a graph with a tree-decomposition
with all bags of size at most eight with a single exceptional bag of size $\max\{8,32g-27\}$.
\end{abstract}

\section{Introduction}
\label{sec:intro}

Twin-width is a graph parameter,
which has recently been introduced by Bonnet, Kim, Thomass\'e and Watrigant~\cite{BonKTW20,BonKTW22}.
It has quickly become one of the most intensively studied graph width parameters
due to its many connections to algorithmic and structural questions in both computer science and mathematics.
In particular,
classes of graphs with bounded twin-width (we refer to Section~\ref{sec:prelim} for the definition of the parameter)
include at the same time well-structured classes of sparse graphs and well-structured classes of dense graphs.
Particular examples are classes of graphs with bounded tree-width,
with bounded rank-width (or equivalently with bounded clique-width), and
classes excluding a fixed graph as a minor.
As the first order model checking is fixed parameter tractable for classes of graphs with bounded twin-width~\cite{BonKTW20,BonKTW22},
the notion led to a unified view of various earlier results on fixed parameter tractability of first order model checking of graph properties~\cite{Cou90,CouMR00,DvoKT10,DvoKT13,FriG99,FriG01,See96}, and
more generally first order model checking properties of other combinatorial structures such as
matrices, permutations and posets~\cite{BalH21,BonGOSTT22,BonNOST21,BonNOST23eurocomb}.

The foundation of the theory concerning twin-width has been laid by Bonnet, Kim, Thomass\'e and their collaborators
in a series of papers~\cite{BonGKTW21b,BonGKTW21c,BonGOSTT22,BonGOT22stacs,BonKRT22,BonGTT22,BonCKKLT22,BonKTW20,BonKTW22},
also see~\cite{Tho22}.
The amount of literature on twin-width is rapidly growing and
includes exploring algorithmic aspects of twin-width~\cite{BerBD22,BonGKTW21c,BonGOSTT22,BonKRTW21,PilSZ22},
combinatorial properties~\cite{AhnHKO22,BalH21,BerGGHPS23,BonGKTW21c,BonKRT22,DreGJOR22,PilS23}, and
connections to logic and model theory~\cite{BonGOSTT22,BonKTW20,BonKTW22,BonNOST21,BonNOST23eurocomb,GajPT22}.
While it is known that many important graph classes have bounded twin-width,
good bounds are known only in a small number of specific cases.
One of the examples is the class of graphs of bounded tree-width
where an asymptotically optimal bound, exponential in tree-width, was proven by Jacob and Pilipczuk~\cite{JacP22wg}.
Another example is the class of planar graphs.
The first explicit bound of $583$ by Bonnet, Kwon and Wood~\cite{BonKW22}
was gradually improved in a series of papers~\cite{JacP22wg,BekLHK22,Hli22}
culminating with a bound of $8$ obtained by Hlin\v en\'y and Jedelsk\'y~\cite{HliJ23};
also see~\cite{Hli23,Hli23eurocomb} for a simpler proof of this bound and
\cite{Jed24} for a promising approach of obtaining the upper bound of $7$,
which would be tight
since Lamaison and the first author~\cite{KraL23online} presented a construction of a planar graph with twin-width $7$.
In this paper, 
we extend this list by providing an asymptotically optimal upper bound on the twin-width of graphs embeddable in surfaces of higher genera.
Specifically, we prove the following two results (the latter is used to prove the former):
\begin{itemize}
\item We show that the twin-width of a graph embeddable in a surface of Euler genus $g$ is at most $18\sqrt{47g}+O(1)$,
      which is asymptotically best possible;
      our proof also yields a quadratic time algorithm to find a witnessing sequence of vertex contractions.
\item We provide a strengthening of the Product Structure Theorem for graphs embeddable in a surface of Euler genus $g$
      by showing that such graphs are subgraphs of a strong product of a path and a graph that almost has a bounded tree-width.
\end{itemize}
We next present the two results in more detail while also presenting the related existing results, and
discuss algorithmic aspects of our results.

\subsection{Twin-width of graphs embeddable in surfaces}

Graphs that can be embedded in surfaces of higher genera, such as the projective plane, the torus and the Klein bottle,
form important minor-closed classes of graphs with many applications and connections~\cite{MohT01}.
While the general theory concerning minor-closed classes of graphs
yields that graphs embeddable in a fixed surface have bounded twin-width,
the bounds are quite enormous:
the results from~\cite[Section 4]{BonKRT22} on $d$-contractible graphs (graphs embeddable in a surface of Euler genus $g$
are $O(g)$-contractible~\cite{Iva92}) yields a bound double exponential in $g$, and
the Product Structure Theorem for graphs embeddable in surfaces~\cite{DujJMMUW19,DujJMMUW20}
together with results on the twin-width of graphs with bounded tree-width~\cite{JacP22wg},
of the strong product of graphs~\cite{PetS23jour} and their subgraph closure~\cite{BonGKTW21b}
yields an exponential bound.

Bonnet, Kwon and Wood~\cite{BonKW22} showed that
every graph embeddable in a surface of Euler genus $g$ has twin-width at most $205g+583$.
Our main result asserts that twin-width of every graph that can be embedded in a surface of Euler genus $g$
is at most $18\sqrt{47g}+O(1)\approx 123.4\sqrt{g}+O(1)$.
This bound is asymptotically optimal as any graph with $\sqrt{6g}-O(1)$ vertices can be embedded in a surface of Euler genus $g$ and
the $n$-vertex Erd\H os-R\'enyi random graph $G_{n,1/2}$ has twin-width at least $n/2-O(\sqrt{n\log n})$~\cite{AhnCHKO22},
i.e., there exists a graph with twin-width $\sqrt{3g/2}-o(\sqrt{g})$ embeddable in a surface of Euler genus $g$ (a short
proof is given for completeness in Section~\ref{sec:lower}).
In particular, our upper bound asymptotically differs from the lower bound by a multiplicative factor $6\sqrt{282}\approx 100.76$.

We remark that
we are aware of several parts of our argument
whose refinement would lead to a decrease of the multiplicative constant in the upper bound (with the resulting
multiplicative constant to be around $20$).
However, we have decided not to do so due to the technical nature of such refinements and
the absence of additional structural insights gained by the refinements.

\subsection{Product Structure Theorem}

To prove our main result,
we prove a modification of the Product Structure Theorem that applies to graphs embeddable in surfaces.
The Product Structure Theorem is a recent significant structural result introduced by
Dujmovi\'c, Joret, Micek, Morin, Ueckerdt and Wood~\cite{DujJMMUW19,DujJMMUW20},
which brought new substantial insights into the structure of planar graphs and
led to breakthroughs on several long standing open problems concerning planar graphs, see, e.g.~\cite{DujEJWW20}.
We also refer to the survey by Dvo\v r\'ak et al.~\cite{DvoHJLW21} on the topic.
The statement of the Product Structure Theorem originally proven by Dujmovi\'c et al.~\cite{DujJMMUW19,DujJMMUW20}
reads as follows (we remark that the statement in~\cite{DujJMMUW19,DujJMMUW20} does not include the condition
on planarity of the graph of bounded tree-width, however, an easy inspection of the proof yields this).

\begin{theorem}
\label{thm:prod8}
Every planar graph is a subgraph of the strong product of a path and a planar graph with tree-width at most $8$.
\end{theorem}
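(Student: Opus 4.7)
Since every planar graph is a subgraph of a planar triangulation and the property of being a subgraph of a strong product is preserved under taking subgraphs, I would first reduce to the case that $G$ is a planar triangulation. Fix a planar embedding of $G$ with outer face $x_0y_0z_0$, set $r=x_0$, and compute a BFS spanning tree $T$ rooted at $r$; this yields the BFS layering $V_0,V_1,\dots,V_k$ where $V_i$ is the set of vertices at distance $i$ from $r$ in $G$. The goal is to produce a partition $\mathcal{P}$ of $V(G)$ into \emph{vertical paths}, meaning subpaths of root-to-leaf paths in $T$. The desired planar graph $H$ will be the minor of $G$ obtained by contracting each part of $\mathcal{P}$; planarity of $H$ is then automatic.

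The core of the construction, following Dujmovi\'c, Joret, Micek, Morin, Ueckerdt and Wood, is a recursive \emph{tripod decomposition}. I would maintain the invariant that every near-triangulation produced during the recursion has its outer cycle decomposed into (at most) three vertical paths that meet at the vertices of a boundary triangle of $G$. Inside such a near-triangulation, I would select a suitable interior triangular face $xyz$ and form a \emph{tripod}: three vertical paths starting at $x$, $y$, $z$ and going upward in $T$ until each first meets one of the three boundary vertical paths. Removing these three tripod paths separates the current near-triangulation into at most three smaller near-triangulations, each once again bounded by (up to) three vertical paths, so the recursion may continue. The base case is a single triangular face, and the partition $\mathcal{P}$ is the collection of all vertical paths ever inserted.

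To bound the tree-width of $H=G/\mathcal{P}$ by $8$, I would build a tree-decomposition whose underlying tree mirrors the recursion tree of the tripod decomposition: the bag at each node contains the (at most three) vertical paths forming the current boundary together with the (at most three) tripod paths inserted at that step. A careful bookkeeping, in which tripod paths are identified with boundary paths whenever they degenerate or abut, shows that the bag size never exceeds $8$. Because every edge of $G$ lies inside some near-triangulation that occurs in the recursion, its two endpoints belong to a common bag, which verifies that we indeed have a valid tree-decomposition of $H$. Finally, the containment $G\subseteq H\boxtimes P$, where $P$ is a path with one vertex per BFS layer indexed by distance from $r$, follows because each vertical path has at most one vertex in every layer and every edge of $G$ joins vertices in layers differing by at most one.

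The main obstacle will be making the recursive tripod construction precise enough that the boundary invariant really is preserved at every step and that the bag-size bound of $8$ is actually achieved. The delicate cases are those in which a tripod path degenerates to a single boundary vertex or in which two branches of the recursion share a vertical path, since these are the configurations where a naive count of vertical paths in a bag would overshoot $8$; they must be handled by systematically merging a new tripod path with an existing boundary path whenever the two coincide. Once this bookkeeping is settled, both the tree-width bound and the strong product containment reduce to direct verifications from the definitions.
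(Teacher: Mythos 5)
Theorem~\ref{thm:prod8} is not proved in this paper; it is quoted from Dujmovi\'c, Joret, Micek, Morin, Ueckerdt and Wood. So the comparison here is between your sketch and the known proof, and more locally the variant that appears in this paper as Lemma~\ref{lm:tw7}. Your overall plan is the right one: reduce to a triangulation, take a BFS tree $T$ and its layering, partition $V(G)$ into vertical paths via a Sperner-driven tripod recursion, read off a tree-decomposition from the recursion tree, and get the strong-product containment from the layering.

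The genuine gap is the invariant you propose to maintain, namely that each near-triangulation arising in the recursion has its outer cycle partitioned into at most \emph{three} vertical paths. This is not preserved by a tripod step. When you cut along a tripod with legs $A,B,C$ dropping from an inner Sperner face to the three boundary paths, the sub-region between $P_1$ and $P_2$, say, is bounded by a \emph{piece} of $P_1$, the leg $A$, the face edge, the leg $B$, and a \emph{piece} of $P_2$ --- that is, four vertical paths, not three, and a leg need not attach to an endpoint of the boundary path it reaches, so the piece of $P_1$ and $A$ do not merge into a single vertical path. Iterating, the number of boundary paths climbs to five, six, and so on; with a naive coloring nothing caps it, so the stated bag bound does not follow. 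The count ``3 boundary paths + 3 tripod paths'' suggests bags of size 6 and tree-width 5, which is strictly better than the theorem asserts --- a sign the invariant is too strong. Your last paragraph correctly flags this bookkeeping as the hard part, but the fix is not merely ``merge a tripod path with a boundary path when they coincide''; one genuinely needs a different invariant.

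Two standard ways to close the gap, both consistent with your framework. First, the original Dujmovi\'c et al.\ argument treats a tripod (up to three vertical paths hanging from a common inner face) as a single boundary unit and maintains the invariant that each region is bounded by at most three tripods; a bag then holds the at most $3\times 3 = 9$ paths of the three boundary tripods, giving tree-width~$8$. Second, one can keep individual vertical paths as the units but allow up to \emph{six} of them on the boundary, choosing the Sperner $3$-coloring so that each sub-region again has at most six boundary paths; this is precisely what Lemma~\ref{lm:tw7} in this paper does (following Ueckerdt, Wood and Yi), yielding bags of size at most $8$ and hence tree-width~$7$, slightly stronger than Theorem~\ref{thm:prod8}. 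Either of these repairs your sketch; as written, the three-path invariant is the step that fails.
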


Ueckerdt et al.~\cite{UecWY22} improved the result as follows (in fact, we state a corollary of their main result
to avoid defining the notion of simple tree-width that is not needed in our further presentation).

\begin{theorem}
\label{thm:prod6}
Every planar graph is a subgraph of the strong product of a path and a planar graph with tree-width at most $6$.
\end{theorem}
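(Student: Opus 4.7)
My plan is to refine the BFS-layering approach used to prove Theorem~\ref{thm:prod8}. First I would reduce to planar triangulations, since if $T$ is any planar triangulation containing $G$ as a subgraph, it suffices to prove the conclusion for $T$. Fix a vertex $r$ of $T$, let $L_i$ denote the vertices at BFS distance $i$ from $r$, and partition $V(T)$ into vertical paths $Q_1,Q_2,\ldots$, where each $Q_j$ is a subpath of the BFS tree whose vertices lie in consecutive BFS layers. Let $H$ be the planar quotient graph obtained by contracting each $Q_j$ to a single vertex $h_j$. The map sending $v\in Q_j\cap L_i$ to the pair $(h_j,i)$ embeds $T$ (and hence $G$) as a subgraph of $H\boxtimes P$, where $P$ is the one-way infinite path on $\{0,1,2,\ldots\}$: an edge within a single $Q_j$ becomes an edge in the second coordinate only, whereas an edge between distinct $Q_j$ and $Q_{j'}$ becomes an edge of $H$ between $h_j$ and $h_{j'}$ combined with a change of at most one BFS layer.

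The central step, and the main obstacle, is to choose the vertical path decomposition so that the quotient $H$ has tree-width at most $6$ rather than $8$ as in Theorem~\ref{thm:prod8}. I would construct a tree-decomposition of $H$ directly by sweeping through $T$ in the order determined by its planar embedding and the BFS tree, maintaining at each step a bag consisting of the quotient vertices whose corresponding vertical paths are ``active'' on the boundary between processed and unprocessed regions. The key combinatorial lemma would assert that, for a sufficiently careful decomposition---for instance one in which each vertical path is chosen greedily so that the quotient is forced to be close to a planar $3$-tree---this active boundary contains at most seven vertical paths at every step, which gives width~$6$.

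The main technical difficulty is establishing this tighter bag-size bound. The proof of Theorem~\ref{thm:prod8} uses a relatively flexible argument yielding bags of size up to $9$; the improvement to $6$ requires exploiting specific structural properties of planar triangulations beyond $3$-connectivity, together with a coordinated choice of both the BFS tree and the grouping of vertices into vertical paths. I would expect the argument to proceed via an exchange lemma: any bag violating the bound should admit a local rerouting of vertical paths that reduces its size without affecting the rest of the tree-decomposition, and iterating such local improvements would eventually produce a decomposition witnessing the target width.
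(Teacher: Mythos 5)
The paper does not prove Theorem~\ref{thm:prod6} itself; it cites the result of Ueckerdt, Wood and Yi~\cite{UecWY22} (in fact stating only a corollary of their result, which is about the stronger notion of simple tree-width). The technique behind that proof, however, is echoed in this paper's Lemma~\ref{lm:tw7}, so it is fair to compare against that.

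Your high-level framing is the standard and correct one: reduce to a triangulation, take a BFS layering, partition into vertical paths, contract paths to a quotient $H$, and observe the embedding into $H\boxtimes P$. That much is shared by all proofs in this family. The gap is in the central step, and it is a genuine one. You propose to get a width-$6$ decomposition of $H$ by a ``sweep'' maintaining an active boundary of at most seven paths, but you do not actually define the sweep, the invariant it maintains, or the rule for choosing vertical paths, and you explicitly defer the key lemma to a hoped-for ``exchange lemma'' that would locally reroute paths. No such exchange argument appears in the literature, and there is no reason offered that local rerouting would terminate or monotonically decrease bag sizes --- width of a tree-decomposition is a global quantity, and a bag can be forced to be large by constraints far away. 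As written, the proposal contains the statement of the problem (``show the active boundary has $\le 7$ paths'') but not a mechanism for solving it.

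The argument that actually works, in~\cite{UecWY22} and as reproduced in Lemma~\ref{lm:tw7} of this paper, is a recursive divide-and-conquer driven by Sperner's Lemma rather than a sweep. One starts from a near-triangulation whose outer boundary is covered by at most $6$ vertical paths, three-colors the interior vertices according to which boundary path they are reachable from in the BFS forest, applies Sperner's Lemma (Lemma~\ref{lm:sperner}) to find a trichromatic inner triangle, extends three new vertical paths from the boundary to that triangle, and recurses on the (at most) three smaller near-triangulations created. Each recursive bag then contains the at most $6$ boundary paths plus the $3$ new splitter paths, and a case analysis (especially the $k=6$ case, which requires an intermediate bag) keeps the width to $7$. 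This Sperner-based trisection is the crucial combinatorial idea missing from your proposal. Without it, the claim that the boundary stays small is an assertion, not a proof, and the proposal does not establish the theorem.
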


Dujmovi\'c et al.~\cite{DujJMMUW19,DujJMMUW20} also proved two extensions of the Product Structure Theorem
to graphs embeddable in surfaces.

\begin{theorem}
\label{thm:prodgen}
Every graph embeddable in a surface of Euler genus $g>0$
is a subgraph of the strong product of a path, the complete graph $K_{2g}$ and
a planar graph with tree-width at most $9$.
\end{theorem}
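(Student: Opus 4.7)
The plan is to reduce Theorem~\ref{thm:prodgen} to its planar counterpart (Theorem~\ref{thm:prod8}) via a topological surgery that cuts the surface along a small homology basis, with the $K_{2g}$ coordinate keeping track of the resulting multiple copies of each vertex.

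First, I would fix a BFS spanning tree $T$ of $G$ rooted at an arbitrary vertex $r$, yielding BFS layers $V_0,V_1,\dots$ that will index the path coordinate of the final product. Next, using standard topological graph theory, I would select a set $E^{\star}=\{e_1,\dots,e_{2g}\}$ of at most $2g$ non-tree edges whose fundamental cycles in $T$ form a basis of the first homology of $\Sigma$. Cutting $\Sigma$ along the union of these fundamental cycles turns the surface into a disk; the corresponding ``unfolded'' graph $G^{+}$ is planar, each vertex $v$ of $G$ has at most $2g$ preimages in $G^{+}$, and a quotient map $\pi\colon V(G^{+})\to V(G)$ records how copies are identified.

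Then I would apply Theorem~\ref{thm:prod8} to $G^{+}$ to obtain a planar graph $H^{\circ}$ of tree-width at most $8$ and a path $P$ with $G^{+}\subseteq H^{\circ}\boxtimes P$. By aligning the BFS layering used for $G^{+}$ with the BFS layering of $G$ (so that two preimages of the same vertex of $G$ receive the same $P$-coordinate), I would ensure that every edge of $G$ joins vertices whose $P$-coordinates differ by at most one, whether or not that edge was cut. Finally, I would map each vertex $v$ of $G$ to a triple $(h_v,p_v,k_v)\in V(H)\times V(P)\times V(K_{2g})$, where $(h_v,p_v)$ gives the $H^{\circ}\boxtimes P$-coordinates of a chosen preimage of $v$ in $G^{+}$ and $k_v\in V(K_{2g})$ names which preimage was chosen; the $K_{2g}$ coordinate then freely supplies the adjacencies needed to realise the at most $2g$ cut edges as well as any edge of $G$ now spanning two distinct preimages of a single vertex. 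A single additional vertex in $H$, inserted into every bag of a tree-decomposition of $H^{\circ}$ so that these exceptional edges can be routed in the first coordinate as well, raises the tree-width from $8$ to $9$.

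I expect the main obstacle to be reconciling three constraints at once: (i)~the BFS layering of $G$ must survive the topological cutting so that every edge of $G$ remains between consecutive layers of $P$; (ii)~the $2g$ preimages of each vertex must be indexed so that each cut edge is expressible within the $K_{2g}$ coordinate; and (iii)~only a single extra vertex per bag may be added when passing from tree-width $8$ to $9$. It is precisely the tension between (i)--(iii) that forces both the $K_{2g}$ factor and the $+1$ in the tree-width bound, and this bookkeeping will be the combinatorial heart of the argument.
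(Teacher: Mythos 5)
The paper does not prove Theorem~\ref{thm:prodgen}; it is quoted from Dujmovi\'c, Joret, Micek, Morin, Ueckerdt and Wood \cite{DujJMMUW19,DujJMMUW20}. Your high-level plan --- cut the surface along a BFS-compatible system of curves, apply the planar Product Structure Theorem to the resulting disk, and use the $K_{2g}$ factor to account for duplicated boundary vertices --- is the right strategy, but the mechanism you describe for realizing the cut edges does not work. In a strong product $H \boxtimes P \boxtimes K_{2g}$, the $K_{2g}$ coordinate can never \emph{create} an adjacency that fails in the first two coordinates; since $K_{2g}$ is complete it imposes no constraint whatsoever, so the adjacency test collapses to the one for $H \boxtimes P$. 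Consequently, if $uv$ is a cut edge and the chosen preimages $\hat u, \hat v$ of $u,v$ in $G^{+}$ are not adjacent in $G^{+}$, the pairs $(h_u,p_u)$ and $(h_v,p_v)$ you assign will typically be neither equal nor adjacent in $H^{\circ}\boxtimes P$, and no choice of $k_u,k_v$ repairs this --- so the edge $uv$ is simply lost. The ``single additional vertex inserted into every bag'' is actually the key device, not a patch: the correct move is to send \emph{every} vertex lying on one of the at most $2g$ boundary (vertical) paths to that universal vertex of $H$ --- not to the $H^{\circ}$-coordinate of a chosen preimage --- and to use the $K_{2g}$ coordinate only to distinguish the at most $2g$ such vertices in each BFS layer, one per boundary path. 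Then cut edges and boundary-to-interior edges are realized because the universal vertex is adjacent to everything in $H$ and the BFS layering controls the $P$-coordinate, while $K_{2g}$ merely keeps the boundary images injective within each layer. As written, your proposal conflates two distinct mechanisms and leans on the one that cannot do the job.

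There are also a few smaller slips worth noting: a surface of Euler genus $g$ is cut to a disk by $g$ non-tree edges together with the $2g$ tree paths from their endpoints to the root (as in Lemma~\ref{lm:BFS}), so the homology basis has size $g$, not $2g$; the number of edges duplicated by the cut is not ``at most $2g$'' but comprises all edges of $F_0$, which can be linear in $|V(G)|$; and the planar theorem must be applied to the disk in a form that respects a prescribed partition of the outer boundary into at most $2g$ vertical paths (as in Lemma~\ref{lm:tw7} or its analogue in \cite{DujJMMUW19,DujJMMUW20}), rather than to an abstract planar graph $G^{+}$ with an unconstrained decomposition.
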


\begin{theorem}
\label{thm:prodgen2}
Every graph embeddable in a surface of Euler genus $g>0$
is a subgraph of the strong product of a path, the complete graph $K_{\max\{2g,3\}}$ and
a planar graph with tree-width at most $4$.
\end{theorem}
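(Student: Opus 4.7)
The plan is to proceed by induction on the Euler genus $g$, using the planar Product Structure Theorem as the base case and reducing higher-genus cases to lower-genus cases by cutting along non-contractible cycles. For the base case, I would invoke the sharper planar variant asserting that every planar graph is a subgraph of $P \boxtimes K_3 \boxtimes H$ for some planar $H$ of tree-width at most $3$; this is a known strengthening of Theorem~\ref{thm:prod8} and handles $g = 0$ (the factor $K_3$ matches $K_{\max\{0,3\}}$) and, since $\max\{2,3\} = 3$, also the case $g = 1$.

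For the inductive step, I would assume $g \geq 2$ and take $G$ embedded in a surface $\Sigma$ of Euler genus $g$, triangulating without loss of generality. The first step is to find a non-contractible simple cycle $C$ in $G$ and to cut $\Sigma$ along $C$, capping the one or two resulting boundary circles with disks to obtain a surface $\Sigma'$ of Euler genus $g' \leq g - 1$. This operation replaces $G$ by a graph $G'$ embedded in $\Sigma'$ in which each vertex of $C$ is duplicated (once for each side of the cut), with the caps triangulated by auxiliary edges. Applying the induction hypothesis to $G'$ then yields $G' \subseteq P' \boxtimes K_{\max\{2g',3\}} \boxtimes H'$ with $H'$ planar of tree-width at most $4$.

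The graph $G$ is then recovered from $G'$ by identifying the duplicated pairs of vertices, and the key task is to realize this identification within a strong product of the stated form. I would arrange the cut so that $C$ is a BFS geodesic in $G$, or a concatenation of two such geodesics, which forces the two copies of each $v \in V(C)$ to lie in the same coordinate of the path factor $P'$; the identification then takes place fiber by fiber. Within a single fiber, the two copies of $v$ can be coalesced by adjoining at most two new coordinates to the complete-graph factor, inflating $K_{\max\{2g',3\}}$ to $K_{\max\{2g,3\}}$ while leaving the tree-width of the planar factor unchanged.

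The main obstacle is the coordination between the chosen cycle $C$ and the product decomposition delivered by the induction hypothesis. I would need to ensure simultaneously that a suitably short non-contractible cycle exists, that the resulting lower-genus embedding admits a BFS layering compatible with that of $G$, and that each cut-identification increases the complete-graph factor by a bounded amount so that the total stays at $\max\{2g,3\}$. This bookkeeping---absorbing the identifications cycle by cycle into the $K$ factor without letting the planar factor exceed tree-width $4$---is the technical heart of the argument; it is where known bounds on edge-width and the careful placement of the auxiliary triangulating edges in the caps would enter.
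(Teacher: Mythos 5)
This statement is not actually proved in the paper; Theorem~\ref{thm:prodgen2} is quoted from Dujmovi\'c et al.~\cite{DujJMMUW19,DujJMMUW20}, and their proof is \emph{not} an induction on genus. The established route (used by them and also adapted in Section~\ref{sec:product} of the present paper) cuts the surface to a disk in a single step via a BFS spanning tree and at most $g$ extra non-tree edges (Lemma~\ref{lm:BFS}), obtains a planar near-triangulation whose boundary walk is covered by at most $2g$ vertical paths, applies a planar product-structure argument inside the disk, and accounts for the at most $2g$ boundary paths \emph{all at once} by the $K_{\max\{2g,3\}}$ factor. No identification of vertices inside an already-built strong product ever occurs.

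Your genus-by-genus induction has several concrete gaps. First, the identification step is not sound as stated: if the two copies $u_1,u_2$ of a boundary vertex sit at $(p,k_1,h_1)$ and $(p,k_2,h_2)$ in $P'\boxtimes K_m\boxtimes H'$, merging them does not produce a vertex of a strong product, and ``adjoining two coordinates to the $K$-factor'' does not by itself repair this; you would need to show the merged adjacency is realized by a new product map, which is exactly the hard part. Second, the bookkeeping on the $K$-factor does not close: starting from $K_3$ at genus $0$ and adding two coordinates per unit drop in Euler genus yields $K_{2g+3}$, not $K_{\max\{2g,3\}}$. Third, the compatibility you need between the cycle $C$ chosen in $G$ and the layering of the product decomposition delivered by the induction hypothesis for $G'$ is not available: the inductive call returns \emph{some} layering of $G'$, and there is no reason the two copies of each $v\in V(C)$ end up in the same layer of that layering, even if $C$ was a BFS geodesic in $G$. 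Finally, if identifications genuinely left the planar factor untouched you would prove tree-width $3$, not the claimed $4$; the extra unit in the actual theorem comes precisely from accommodating the cut boundary inside the planar factor, which your accounting ignores. To make progress you should abandon the induction on genus and instead cut the surface to a single disk using a BFS tree plus $g$ non-tree edges as in Lemma~\ref{lm:BFS}, and absorb the resulting $\le 2g$ boundary paths into the $K$-factor in one step.
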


A stronger version was proven by Distel at el.~\cite{DisHHW22};
the discussion of the even stronger statement implied by the proof of the next theorem given in~\cite{DisHHW22}
can be found after Theorem~\ref{thm:prodnew}.

\begin{theorem}
\label{thm:prodgenP}
Every graph embeddable in a surface of Euler genus $g>0$
is a subgraph of the strong product of a path, the complete graph $K_{\max\{2g,3\}}$ and
a planar graph with tree-width at most $3$.
\end{theorem}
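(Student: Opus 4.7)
The plan is to reduce Theorem~\ref{thm:prodgenP} to the planar setting by a classical surgical argument: cut the Euler-genus-$g$ surface along $O(g)$ simple closed curves chosen compatibly with a BFS layering so that the resulting graph is planar, apply a planar product-structure result (in a tree-width-$3$ form), and then lift the decomposition back, absorbing the duplicated cut-vertices into an auxiliary $K_{\max\{2g,3\}}$-factor.

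Concretely, I would start by fixing an arbitrary BFS layering $L_0, L_1, \ldots$ of the embedded graph $G$ together with the associated BFS spanning tree $T$. By standard surface topology, one can select at most $2g$ homotopically independent simple closed curves in $\Sigma$ whose removal turns $\Sigma$ into an open disk; by routing them through the tree-cotree decomposition associated with $T$, each curve may be taken to consist of two BFS tree paths from a common ancestor joined by one non-tree edge, so that it meets every BFS layer $L_i$ in at most two vertices. Cutting along these curves and duplicating the vertices lying on the cuts yields a planar graph $G'$ embedded in a closed disk together with an identification map $\pi \colon V(G') \to V(G)$ in which every vertex of $G$ has at most $2g$ preimages, all lying in the same BFS layer as their image.

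Next, I would apply a planar Product Structure Theorem in tree-width-$3$ form to $G'$ to obtain a path $P$ and a planar graph $H$ of tree-width at most $3$ with $G' \subseteq P \boxtimes H$, where the vertex of $G'$ indexed by the $i$-th vertex of $P$ lies in the image of $L_i$ under the vertex duplication. To descend back to $G$, I would compose with $\pi$: since every vertex of $G$ has at most $2g$ preimages within a single BFS layer, the preimages collapse to at most $\max\{2g,3\}$ classes per layer, producing precisely the additional $K_{\max\{2g,3\}}$ factor in the product decomposition, while the path factor $P$ is inherited directly from $G'$ and the planar tree-width-$3$ factor $H$ is unchanged.

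The main obstacle is arranging the cutting curves so that the two structural constraints coexist: the curves must jointly cut $\Sigma$ to a disk (so $G'$ is planar) yet each must respect the BFS layering (so that the layering of $G$ lifts to a layering of $G'$ compatible with the path factor $P$ and so that each preimage fibre lies in a single layer). This is achieved via the tree-cotree basis, since the fundamental cycles of non-tree edges with respect to $T$ generate $H_1(\Sigma)$ and any basis of size $2g$ drawn from among them inherits the desired layer-intersection bound from the tree. A secondary technical point is to ensure that duplicating vertices along the cuts does not inflate the tree-width of the planar factor beyond $3$; this is guaranteed because each duplicated vertex belongs to at most two cutting curves and the duplication can be performed locally without altering the underlying planar tree-decomposition used for $H$.
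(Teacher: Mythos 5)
The paper does not prove Theorem~\ref{thm:prodgenP}; it is quoted from Distel, Hickingbotham, Huynh and Wood~\cite{DisHHW22}, and the present paper's own contribution is the strengthening in Theorem~\ref{thm:prodnew}. So there is no in-paper proof to match your argument against, but it can be checked against the approach in the literature, and it has a genuine gap.

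The gap is at the ``descend back to $G$'' step. An inclusion $G\subseteq P\boxtimes K_{\max\{2g,3\}}\boxtimes H$ requires three coordinate maps $\alpha\colon V(G)\to V(P)$, $\kappa\colon V(G)\to[\max\{2g,3\}]$, $\eta\colon V(G)\to V(H)$, jointly injective, such that each edge of $G$ is sent coordinatewise to equal or adjacent vertices. After cutting and applying a planar product-structure result to $G'$, you obtain $\alpha'\colon V(G')\to V(P)$ and $\eta'\colon V(G')\to V(H)$. For a vertex $v$ of $G$ with preimages $v_1,\dots,v_m$ in $G'$, the path-coordinate is consistent (all $v_i$ lie in the layer of $v$), but nothing forces the $H$-coordinates $\eta'(v_1),\dots,\eta'(v_m)$ to coincide or even be close: after cutting, the copies of $v$ lie on opposite sides of the cut and a generic planar decomposition of $G'$ will assign them to unrelated vertical paths, hence to unrelated vertices of $H$. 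There is then no well-defined $\eta(v)$: if you set $\eta(v)=\eta'(v_1)$ and an edge $vw$ of $G$ is witnessed in $G'$ only via the preimage $v_2$, the pair $(\eta(v),\eta(w))=(\eta'(v_1),\eta'(w'))$ need not be an edge or a loop of $H$. The $K_{\max\{2g,3\}}$ factor cannot repair this; it only provides slack in the middle coordinate, whereas the inconsistency is in the $H$-coordinate. The sentence claiming the preimages ``collapse to at most $\max\{2g,3\}$ classes per layer, producing precisely the additional $K_{\max\{2g,3\}}$ factor'' presupposes the compatibility that actually has to be proved.

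What the argument in~\cite{DisHHW22} (as summarized in this paper just after Theorem~\ref{thm:prodnew}) does instead is to force the entire boundary of the cut-open disk onto a single column of the product: there is one distinguished vertex $h^*$ of the planar graph $H_3$ of tree-width $3$ such that every vertex lying on the boundary walk $W$ is assigned $\eta\equiv h^*$, and the up-to-$2g$ boundary vertices within one layer are then separated only by the $K_{2g}$-coordinate. This makes $\eta$ well-defined on $V(G)$ without any identification step and yields the ``one vertex of $H_3$ blown up to $K_{2g}$, remaining vertices blown up to $K_3$'' picture that the paper describes. Establishing that the planar decomposition can be chosen with this boundary condition, compatibly with the inherited BFS layering, is the crux, and it is exactly what your proposal skips. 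A secondary point: your remark that duplication ``does not inflate the tree-width of the planar factor beyond $3$'' is misplaced, since $H$ is constructed \emph{after} the cutting, so there is no pre-existing tree-decomposition of the planar factor for the duplication to disturb.
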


We remark that it is not possible to replace $K_{2g}$ in the statement of Theorems~\ref{thm:prodgen}, \ref{thm:prodgen2} and~\ref{thm:prodgenP}
with a complete graph of sublinear order as long as the bound on the tree-width stays constant
since the layered tree-width of graphs embeddable in a surface of Euler genus $g$ is linear in $g$~\cite{DujMW17} (the definition
of layered tree-width is given in Section~\ref{sec:prelim}).
To prove our upper bound on the twin-width of graphs embeddable in surfaces,
we strengthen the statement of the Product Structure Theorem for graphs embeddable in surfaces as follows.
Theorems~\ref{thm:prodgen}, \ref{thm:prodgen2} and~\ref{thm:prodgenP} imply that
every graph embeddable in a surface of Euler genus $g>0$
is a subgraph of the strong product of a path and a graph with tree-width at most $20g-1$, $\max\{10g-1,14\}$ and $\max\{8g-1,11\}$, respectively.
The next theorem, which we prove in Section~\ref{sec:product}, asserts that
it is possible to assume that the tree-width of the graph in the product is almost at most $7$
in the sense that all bags except possibly for a single bag have size at most $8$.

\begin{theorem}
\label{thm:prodnew}
Every graph embeddable in a surface of Euler genus $g>0$
is a subgraph of the strong product of a path and a graph $H$ that
has a rooted tree-decomposition such that
\begin{itemize}
\item the root bag has size at most $\max\{8,32g-27\}$, and
\item every bag except the root bag has size at most $8$.
\end{itemize}
\end{theorem}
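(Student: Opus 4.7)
The plan is to reduce to the planar case by cutting $\Sigma$ along $O(g)$ curves and to concentrate all the ``genus-induced apex'' data into a single bag of the resulting tree-decomposition rather than into a $K_{\max\{2g,3\}}$ product factor as in Theorem~\ref{thm:prodgenP}. First, I would fix a $2$-cell embedding of $G$ in the surface $\Sigma$ of Euler genus $g > 0$. Standard topological surgery (as in~\cite{DujJMMUW19,DisHHW22}) produces pairwise disjoint simple closed curves $\gamma_1,\dots,\gamma_k$ with $k \le g$ whose removal turns $\Sigma$ into an open disk. Perturbing the $\gamma_i$ to meet $G$ only in vertices, and choosing them as shortest non-contractible cycles in suitable auxiliary graphs, one arranges that the set $S$ of vertices of $G$ visited by some $\gamma_i$, together with the set of vertices of $G' := G - S$ lying on the boundary of the resulting disk, has total size linear in $g$. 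The graph $G'$ embeds in the disk, and in particular is planar.

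Next, I would apply Theorem~\ref{thm:prod6} to $G'$, taking the BFS tree used in the proof of Theorem~\ref{thm:prod6} to be (the restriction to $G'$ of) a BFS tree of all of $G$ rooted on the boundary of the disk. This yields $G' \subseteq P \boxtimes H_0$ with $H_0$ planar of tree-width at most $6$ and a rooted tree-decomposition $\mathcal{T}_0$ of $H_0$ whose bags have size at most~$7$. Define $H$ by adjoining to $H_0$ a new vertex $\tilde{s}$ for each $s \in S$, adjacent in $H$ to (a) the $H_0$-coordinate of every $G$-neighbour of $s$ in $G'$, and (b) every $\tilde{t}$ with $st \in E(G)$. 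Form a rooted tree-decomposition of $H$ by adding above the root of $\mathcal{T}_0$ a new root bag $R$ comprising all vertices $\tilde{s}$, the $H_0$-coordinates of all vertices of $G'$ that are $G$-adjacent to some vertex of $S$, and any further $H_0$-vertices lying on the $\mathcal{T}_0$-paths from the root of $\mathcal{T}_0$ to each such coordinate (needed to restore subtree-connectedness). The desired embedding $G \hookrightarrow P \boxtimes H$ extends the one of $G'$ by sending each $s \in S$ to $(i_s, \tilde{s})$, where $i_s$ is the BFS layer of $s$ in $G$; edges of $G$ incident to $s$ are then covered because consecutive BFS layers differ by at most $1$ in $P$ and the required $H$-adjacencies are built in by construction.

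The main obstacle is the bound $|R| \le 32g - 27$, combined with keeping all other bags of size at most~$8$. This breaks into two tasks: first, showing that $|S|$ and the set of $H_0$-coordinates of vertices adjacent to $S$ are each linear in $g$ — which depends on the topological choice of the $\gamma_i$ and on a ``flat'' behaviour of the BFS product decomposition along the disk boundary — and second, controlling the number of ``path'' $H_0$-vertices promoted to $R$ to re-establish subtree-connectedness, which can be kept linear in $g$ by rooting the underlying BFS at a boundary vertex, so that the $\mathcal{T}_0$-distance from the root to each promoted vertex is bounded. Careful tracking of the constants through both tasks yields $32g - 27$. As noted in the excerpt, Theorem~\ref{thm:prodnew} is essentially implicit in the proof of Theorem~\ref{thm:prodgenP} by Distel et al.~\cite{DisHHW22}: once their argument is reorganised so as to produce a tree-decomposition of $H_0$ with a ``boundary-absorbing'' root rather than a $K_{\max\{2g,3\}}$-product decomposition, Theorem~\ref{thm:prodnew} follows, with the bookkeeping constants coming from their proof.
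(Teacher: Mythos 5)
Your plan runs directly into the obstacle that the paper explicitly identifies as the reason the proof of Theorem~\ref{thm:prodgenP} from Distel et al.~\cite{DisHHW22} does \emph{not} give Theorem~\ref{thm:prodnew}: after cutting $\Sigma$ open to a disk, the ``genus interface'' touches arbitrarily many bags of any tree-decomposition of the planar remainder, and absorbing it into a single root bag costs more than $O(g)$. Two specific steps in your sketch are unsubstantiated and in fact fail. First, the set $S$ of vertices met by the cutting curves $\gamma_1,\dots,\gamma_k$ is \emph{not} of size $O(g)$ in general — the curves cross an unbounded number of edges of $G$, so the boundary of the disk has unbounded length; what \emph{is} $O(g)$ is the number of boundary \emph{vertical paths}, not boundary vertices, and it is crucial that the $H$-factor collapse each such path to a single vertex rather than leave them loose. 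Second, and more seriously, the ``restore subtree-connectedness'' step: you propose to promote into $R$ every $H_0$-vertex on a $\mathcal{T}_0$-path from the root to a bag meeting the interface, and assert this adds only $O(g)$ vertices ``because the $\mathcal{T}_0$-distance from the root is bounded.'' There is no reason for that to hold for the decomposition produced by Theorem~\ref{thm:prod6} — the bags containing the interface coordinates can be scattered at arbitrary depth, so the union of those root-paths can be as large as $H_0$ itself. This is precisely the issue the paper states in Section~\ref{sec:intro}: ``the vertex of the planar graph that is replaced with $K_{2g}$ can be contained in many bags of the tree-decomposition.''

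The paper's actual route avoids this by not applying the planar Product Structure Theorem as a black box. Instead it (i) cuts the surface via Lemma~\ref{lm:BFS} so that the boundary of the disk is covered by at most $6g-1$ segments of at most $2g$ vertical BFS paths (Lemma~\ref{lm:BFS-paths}); (ii) introduces the genuinely new ingredient, Lemma~\ref{lm:twout}, which uses a recursive Sperner-type argument to split the near-triangulation bounded by those paths into at most $3\ell-18 = O(g)$ regions, each bounded by at most six vertical paths, while adding only $O(g)$ additional vertical paths; and only then (iii) applies the width-$7$ Sperner recursion (Lemma~\ref{lm:tw7}) independently inside each region. Because every region's boundary interface consists of at most six paths, the tree-decomposition of each region can be rooted so that its root bag contains exactly those six interface paths, and all of them sit inside the single new root bag $R$ of size $O(g)$. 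Lemma~\ref{lm:twout} is what makes $R$ have bounded size and the rest have size at most $8$; without an analogue of it, ``reorganising'' the Distel et al.\ argument does not go through, and your sketch does not supply one.
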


We remark that,
similarly as in Theorem~\ref{thm:prodgen} it is not possible to replace $K_{2g}$ with a complete graph of smaller order,
it is necessary to permit at least one of the bags to have a size linear in $g$ in Theorem~\ref{thm:prodnew}
since the layered tree-width of graphs embeddable in a surface of Euler genus $g$ is linear in $g$~\cite{DujMW17}.
Hence, the statement of Theorem~\ref{thm:prodnew} is the best possible asymptotically.

It is interesting to note the proof of Theorem~\ref{thm:prodgenP} given in~\cite{DisHHW22}
implies that every graph embeddable in a surface of Euler genus $g>0$
is a subgraph of the strong product of a path and a graph that
can be obtained from a planar graph with tree-width at most $3$
by replacing one vertex of this planar graph with $K_{2g}$ and
the remaining vertices with $K_3$ (and replacing each edge of the planar graph
with a complete bipartite graph between the corresponding sets of vertices).
However, the vertex of the planar graph that is replaced with $K_{2g}$
can be contained in many bags of the tree-decomposition and
so the proof given in~\cite{DisHHW22} does not yield a statement similar to that of Theorem~\ref{thm:prodnew}
since the number of bags in the tree-decomposition with size linear in $g$ can be arbitrary (although
each such bag contains the same $2g$ vertices of $K_{2g}$ in addition to $9$ other vertices).
The main new component in the proof of Theorem~\ref{thm:prodnew} (compared to the proofs given in~\cite{DisHHW22,DujJMMUW19,DujJMMUW20})
is Lemma~\ref{lm:twout} given in Section~\ref{sec:product},
which is crucial so that we are able to restrict the sizes of all but one bag in a tree-decomposition to a constant size.

We also note the following corollary of Theorem~\ref{thm:prodnew} for projective planar graphs.

\begin{corollary}
\label{cor:projective}
Every graph embeddable in the projective plane
is a subgraph of the strong product of a path and a graph with tree-width at most $7$.
\end{corollary}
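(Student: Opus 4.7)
The plan is to derive the corollary immediately from Theorem~\ref{thm:prodnew} by specializing to $g=1$. The projective plane has Euler genus $g=1$, so a graph $G$ embeddable in the projective plane satisfies the hypothesis of Theorem~\ref{thm:prodnew}. Applying the theorem, $G$ is a subgraph of the strong product of a path and a graph $H$ that has a rooted tree-decomposition in which every non-root bag has size at most $8$ and the root bag has size at most $\max\{8,32g-27\} = \max\{8,5\} = 8$.

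Hence all bags of the tree-decomposition of $H$ have size at most $8$, which by the standard relation between bag size and tree-width means that $H$ has tree-width at most $7$. This is exactly the conclusion of Corollary~\ref{cor:projective}. There is no real obstacle here: the only thing to check is that for $g=1$ the exceptional root bag is also covered by the uniform bound $8$, which holds because $32\cdot 1-27=5\le 8$. The corollary is therefore just the boundary case of Theorem~\ref{thm:prodnew} at which the exceptional root bag stops dominating.
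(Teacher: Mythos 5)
Your proof is correct and is exactly the intended derivation: Corollary~\ref{cor:projective} is stated in the paper as an immediate consequence of Theorem~\ref{thm:prodnew}, and specializing to $g=1$ gives $\max\{8,32g-27\}=8$, so all bags have size at most $8$ and the tree-width is at most $7$.
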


\subsection{Algorithmic aspects}

We decided to present our results in purely structural way,
i.e., focus on establishing the bounds without giving an algorithm in parallel.
However, since all the proofs that we present are algorithmic,
we also obtain a quadratic time algorithm (when the genus $g>0$ is fixed) that
given a graph $G$ embeddable in a surface of genus $g$,
constructs a sequence of contractions witnessing that the twin-width of $G$ is at most $18\sqrt{47g}+O(1)$,
i.e., the red degree of trigraphs obtained during contractions does not exceed the bound given in Theorem~\ref{thm:main}.
We remark that we measure the time complexity of the algorithm in terms of the number of vertices in $G$;
note that the number of edges of an $n$-vertex graph embeddable in a surface of Euler genus $g$ is at most $3n+3g-6$.

We now discuss the algorithm in more detail following the steps in the proof of Theorem~\ref{thm:main}.
Since it is possible to find an embedding of a graph in a fixed surface in linear time~\cite{Moh96,Moh99},
we can assume that the input graph $G$ is given together with its embedding in the surface (recall that
for every $g\ge 2$, there are only two non-homeomorphic surfaces of Euler genus $g$).
When the embedding of $G$ in the surface is fixed,
we complete it to a triangulation $G'$ (we permit adding parallel edges if needed).
We next choose an arbitrary BFS spanning tree $T$ of $G'$ and
identify $g$ edges $a_1b_1,\ldots,a_gb_g$ as described in Lemma~\ref{lm:BFS}, which was proven in~\cite{DujJMMUW19,DujJMMUW20}.
The proof of Lemma~\ref{lm:BFS} in~\cite{DujJMMUW19,DujJMMUW20}
proceeds by constructing a spanning tree in the dual graph that avoids the edges of $T$ and
choosing the edges contained in neither $T$ nor the spanning tree of the dual graph as
the edges $a_1b_1,\ldots,a_gb_g$;
this can be implemented in linear time.
When the edges $a_1b_1,\ldots,a_gb_g$ are fixed,
the construction of the walk $W$ and the vertical paths described in Lemma~\ref{lm:BFS-paths} requires linear time.

We next need to identify the vertical paths described in Lemma~\ref{lm:twout} that
split the near-triangulation bounded by $W$ obtained from $G'$ into parts,
each bounded by at most six vertical paths.
This may require processing the near-triangulation repeatedly following the steps in the inductive proof of Lemma~\ref{lm:twout},
however, each step can be implemented in linear time and the number of steps is also at most linear.
Finally, we apply the recursive procedure described in Lemma~\ref{lm:tw7}
to each of the parts delimited by faces of the $2$-connected graph obtained in Lemma~\ref{lm:twout};
again, the number of steps in the recursive procedure is linear and each can be implemented in linear time, and
they directly yield the collection $\PP$ of vertical paths and
the tree-decomposition $\TT$ of $G'/\PP$ described in Theorem~\ref{thm:product}.
Since the paths $\PP$ and the tree-decomposition $\TT$ fully determine the order of the contraction of the vertices and
the order can be easily determined in linear time following the proof of Theorem~\ref{thm:main},
we conclude that there is a quadratic time algorithm that constructs a sequence of contractions such that
the red degree of trigraphs obtained during contractions does not exceed the bound given in Theorem~\ref{thm:main}.

We would like to remark that we have not attempted to optimize the running time of the algorithm,
which would particularly require to refine the recursive steps in the proofs of Lemmas~\ref{lm:tw7} and~\ref{lm:twout}.

\section{Preliminaries}
\label{sec:prelim}

In this section, we introduce notation used throughout the paper.
We use $[n]$ to denote the set of the first $n$ positive integers, i.e., $\{1,\ldots,n\}$.
All graphs considered in this paper are simple and have no parallel edges unless stated otherwise;
if $G$ is a graph, we use $V(G)$ to denote the vertex set of $G$.
A \emph{triangulation} of the plane or a surface of Euler genus $g>0$
is a graph embedded in such a surface such that every face is a $2$-cell, i.e., homeomorphic to a disk, and
bounded by a triangle.
A \emph{near-triangulation} is a $2$-connected graph $G$ embedded in the plane such that
each inner face of $G$ is bounded by a triangle.

We next give a formal definition of twin-width.
A \emph{trigraph} is a graph with some of its edges being red, and
the \emph{red degree} of a vertex $v$ is the number of red edges incident with $v$.
If $G$ is a trigraph and $v$ and $v'$ form a pair of its (not necessarily adjacent) vertices,
then the trigraph obtained from $G$ by \emph{contracting} the vertices $v$ and $v'$
is the trigraph obtained from $G$ by removing the vertices $v$ and $v'$ and introducing a new vertex $w$ such that
$w$ is adjacent to every vertex $u$ that is adjacent to at least one of the vertices $v$ and $v'$ in $G$ and
the edge $wu$ is red if $u$ is not adjacent to both $v$ and $v'$ or at least one of the edges $vu$ and $v'u$ is red,
i.e., the edge $wu$ is not red only if $G$ contains both edges $vu$ and $v'u$ and neither of the two edges is red.
The \emph{twin-width} of a graph $G$ is the smallest integer $k$ such that
there exists a sequence of contractions that reduces the graph $G$,
i.e., the trigraph with the same vertices and edges as $G$ and no red edges, to a single vertex and
none of the intermediate graphs contains a vertex of red degree more than $k$.

A \emph{rooted tree-decomposition} $\TT$ of a graph $G$ is a rooted tree such that
each vertex of $\TT$ is a subset of $V(G)$, which we refer to as a \emph{bag}, and that
satisfies the following:
\begin{itemize}
\item for every vertex $v$ of $G$, there exists a bag containing $v$,
\item for every vertex $v$ of $G$, the bags containing $v$ form a connected subgraph (subtree) of $\TT$, and
\item for every edge $e$ of $G$, there exists a bag containing both end vertices of $e$.
\end{itemize}
If the choice of the root is not important,
we just speak about a \emph{tree-decomposition} of a graph $G$.
The \emph{width} of a tree-decomposition $\TT$ is the maximum size of a bag of $\TT$ decreased by one, and
the \emph{tree-width} of a graph $G$ is the minimum width of a tree-decomposition of $G$.

A \emph{$k$-tree} is defined recursively as follows:
the complete graph $K_k$ is a $k$-tree and
if $G$ is a $k$-tree,
then any graph obtained from $G$ by introducing a new vertex and making it adjacent to any $k$ vertices of $G$ that
form a complete subgraph in $G$ is also a $k$-tree.
Note that a graph $G$ is a $1$-tree if and only if $G$ is a tree.
More generally, a graph $G$ has tree-width at most $k$ if and only if $G$ is a subgraph of a $k$-tree, and
if $G$ has at least $k$ vertices, then $G$ is actually a spanning subgraph of a $k$-tree.
Note that $k$-trees have a tree-like structure given by their recursive definition,
which also gives a rooted tree-decomposition of $G$ with width $k$:
the rooted tree-decomposition of $K_k$ consists of a single bag containing all $k$ vertices, and
the rooted tree-decomposition of the graph obtained from a $k$-tree $G$ by introducing a vertex $w$
can be obtained from the rooted tree-decomposition $\TT_G$ of $G$ by introducing a new bag containing $w$ and its $k$ neighbors and
making this bag adjacent to the bag of $\TT_G$ that contains all $k$ neighbors of $w$ (such a bag exists since
the subtrees of a tree have the Helly property).

A \emph{BFS spanning tree} $T$ of a (connected) graph $G$ is a rooted spanning tree such that
the path from the root to any vertex $v$ in $T$ is the shortest path from the root to $v$ in $G$;
in particular, a BFS spanning tree can be obtained by the breadth-first search (BFS) of a graph.
A \emph{layering} is a partition of a vertex set of a graph $G$ into sets $V_1,\ldots,V_k$,
which are called \emph{layers}, such that every edge of $G$ connects two vertices of the same or adjacent layers,
i.e., layers whose indices differ by one.
If $T$ is a BFS spanning tree of $G$,
then the partition of the vertex set $V(G)$ into sets based on the distance from the root of $T$,
i.e., the first set contains the root,
the second set contains all neighbors of the root,
the third set contains all vertices at distance two from the root, etc.,
is a layering.
A \emph{BFS spanning forest} $F$ is a rooted spanning forest of $G$,
i.e., a forest consisting of rooted trees, such that
there exists a layering $V_1,\ldots,V_k$ of $G$ compatible with $F$,
i.e., for every tree of $F$,
there exists $d$ such that the vertices at distance $\ell$ from the root are all contained in $V_{d+\ell}$.
Note that if $G$ is a graph and $T$ a BFS spanning tree of $G$,
then removing the same vertices in $G$ and $T$, which naturally yields a rooted forest,
results in a graph $G'$ and a BFS spanning forest of $G'$.
Finally, the \emph{layered tree-width} of a graph $G$ is the minimum $k$ for which
there exists a tree-decomposition $\TT$ of $G$ and a layering such that
every bag of $\TT$ contains at most $k$ vertices from the same layer.

Consider a graph $G$ and a BFS spanning tree $T$ of $G$.
A \emph{vertical} path is a path contained in $T$ with no two vertices from the same layer,
i.e., a subpath of a path from a leaf to the root of $T$.
The \emph{top} vertex of a vertical path is its vertex closest to the root and
the \emph{bottom} vertex is its vertex farthest from the root.
We define vertical paths with respect to a BFS spanning forest analogously.
If $\PP$ is a partition of the vertex set of $G$ to vertical paths,
the graph $G/\PP$ is the graph obtained by contracting each of the paths contained in $\PP$ to a single vertex;
note that the vertices of $G/\PP$ can be viewed as the vertical paths contained in $\PP$ and
two vertical paths $P$ and $P'$ are adjacent in $G/\PP$ if there is an edge between $V(P)$ and $V(P')$,
i.e., there is a vertex of $P$ adjacent to a vertex of $P'$.

\section{Product Structure Theorem for graphs on surfaces}
\label{sec:product}

In this section, we provide the version of the Product Structure Theorem for graphs on surfaces,
which we need to prove our upper bound on the twin-width of such graphs.
We start with recalling the following lemma proven by Dujmovi\'c et al.~\cite{DujJMMUW19,DujJMMUW20};
also see Figure~\ref{fig:BFS-torus} for the illustration in the case of the torus.

\begin{lemma}
\label{lm:BFS}
Let $G$ be a triangulation of a surface $\Sigma$ of Euler genus $g>0$ and let $T$ be a BFS spanning tree of $G$.
There exist edges $a_1b_1,\ldots,a_gb_g$ not contained in the tree $T$ with the following property.
Let $F_0$ be the subset of edges of $G$ comprised of the $g$ edges $a_1b_1,\ldots,a_gb_g$ and
the $2g$ paths in $T$ from the root of $T$ to the vertices $a_1,\ldots,a_g$ and $b_1,\ldots,b_g$.
The surface $\Sigma$ after the removal of the edges contained in $F_0$ is homeomorphic to a disk and
its boundary is formed by a closed walk comprised of the edges contained in $F_0$.
\end{lemma}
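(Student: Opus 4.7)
My plan is to combine the classical tree-cotree decomposition of an embedded graph with an inductive regluing argument. First I would identify the $g$ edges $a_1b_1,\ldots,a_gb_g$ by a standard duality: since $E(T)$ is acyclic, the dual edges $(E(T))^*$ do not form a cut of $G^*$, so the subgraph of $G^*$ consisting of the duals of non-tree edges is connected. Picking any spanning tree $T^*$ of this subgraph and applying Euler's formula to $G$ (using $n-m+f=2-g$) yields
\begin{equation*}
|E(G)|-|E(T)|-|E(T^*)| = m-(n-1)-(f-1) = g,
\end{equation*}
so exactly $g$ edges of $G$ lie outside $T$ and have their dual outside $T^*$; these are the edges $a_1b_1,\ldots,a_gb_g$. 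Next I would verify that cutting $\Sigma$ along $F := E(T)\cup\{a_1b_1,\ldots,a_gb_g\}$ produces a topological disk: the edges of $G$ outside $F$ have duals forming the spanning tree $T^*$ of $G^*$, so $\Sigma\setminus F$ is connected, and the Euler characteristic of the cut surface evaluates to $2|F|-(m+|F|)+f=|F|-m+f=1$, confirming the cut surface is a disk.

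The main step is then to reduce from $F$ to $F_0$. Writing $T_0$ for the subtree of $T$ formed by the union of the $2g$ paths from the root to each $a_i$ and each $b_i$, and $T_1 := E(T)\setminus E(T_0)$, observe that for every edge $uv\in T_1$ with $v$ a child of $u$, the entire subtree of $T$ rooted at $v$ lies in $T_1$; otherwise $uv$ would lie on a root-to-$\{a_i,b_i\}$ path and hence in $T_0$. Consequently $T_1$ is a forest of pendant subtrees hanging off vertices of $T_0$, and each such pendant subtree has its leaves among the leaves of $T$ outside $\{a_1,\ldots,a_g,b_1,\ldots,b_g\}$. I would reglue the edges of $T_1$ one at a time by processing each pendant subtree from its leaves inward: at each step, the chosen leaf $v$ of the remaining $T_1$ has only the two sides of its parent edge $uv$ left on the current boundary walk (by induction), and they appear consecutively there, so reidentifying them deletes a pair of adjacent edges and the vertex $v$ from the boundary walk without altering the disk topology. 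After every edge of $T_1$ is reglued, the result is a disk bounded by a closed walk consisting exclusively of edges from $F_0 = E(T_0)\cup\{a_1b_1,\ldots,a_gb_g\}$.

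The principal obstacle is the bookkeeping of the regluing step: one must verify that at every stage of the induction, the chosen leaf edge really appears in two consecutive positions with opposite orientations on the current boundary walk, so that its reidentification is a well-defined cut-and-paste operation preserving the disk. This reduces to observing that the boundary walk of the cut disk traverses each pendant subtree of $T_1$ as a complete back-and-forth tour, licensing a clean inductive peeling from leaves inward.
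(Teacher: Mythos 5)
This lemma is quoted by the paper without proof --- it is recalled from Dujmovi\'c, Joret, Micek, Morin, Ueckerdt and Wood, and the paper's only gloss on the argument is in the algorithmic discussion in Section~1.3, where it says the cited proof ``constructs a spanning tree in the dual graph that avoids the edges of $T$'' and takes the leftover edges as $a_1b_1,\ldots,a_gb_g$. Your proposal reconstructs exactly this tree--cotree approach and is essentially correct: the dual spanning tree $T^*$ exists because $T$ is acyclic, the edge count is $m-(n-1)-(f-1)=g$ by Euler's formula, the cut along $F=E(T)\cup\{a_1b_1,\ldots,a_gb_g\}$ is connected because every face can be reached from every other through non-$F$ edges (whose duals contain $T^*$), and the Euler characteristic calculation gives $\chi=|F|-m+f=(n-1+g)-m+f=1$, so the cut surface is a disk.

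Two remarks on the second half. First, the regluing induction does work, but the ``principal obstacle'' you flag is not actually an obstacle: when $v$ is the current leaf of the remaining forest $T_1$, you have $\deg_{F'}(v)=1$ in the current cut set $F'$ (its children edges have already been reglued, it has no $T_0$-edges because $v\notin V(T_0)$, and it is none of the $a_i,b_i$), so $v$ lifts to a \emph{single} boundary vertex of the current disk and the two lifts of the parent edge $uv$ are automatically consecutive there; folding that spur is a quotient of the disk that is again a disk. Just be explicit that you always peel at the endpoint farther from $T_0$ (never at the attachment vertex). Second, the regluing can be avoided altogether: cutting directly along $F_0$ gives a surface with $\sum_v\max\{1,\deg_{F_0}(v)\}=2|F_0|+(n-|V(T_0)|)$ vertices, $m+|F_0|$ edges and $f$ faces, so $\chi=|F_0|+n-|V(T_0)|-m+f$; since $|F_0|=|V(T_0)|-1+g$, this equals $n-m+f+g-1=1$, and connectedness again follows because $T^*$ uses no $F_0$-edges. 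Vertices outside $V(T_0)$ simply become interior points of the disk. Either route completes the proof; yours is correct but a little longer.
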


Using Lemma~\ref{lm:BFS}, we prove the following.

\begin{lemma}
\label{lm:BFS-paths}
Let $G$ be a triangulation of a surface of Euler genus $g>0$ and let $T$ be a BFS spanning tree of $G$.
There exist a closed walk $W$ in $G$,
a subtree $T_0$ of $T$ that contains the root of $T$, and
$k$ vertex-disjoint vertical paths $P_1,\ldots,P_k$, $k\le 2g$, such that
\begin{itemize}
\item the closed walk $W$ bounds a part of the surface homeomorphic to a disk,
\item the sets $V(P_1),\ldots,V(P_k)$ form a partition of $V(T_0)$, 
      i.e., $V(T_0)=V(P_1)\cup\cdots\cup V(P_k)$, and
\item the closed walk $W$ can be split into $6g-1$ segments,
      each formed by vertices of exactly one of the paths $P_1,\ldots,P_k$.
\end{itemize}
\end{lemma}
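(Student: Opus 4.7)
The plan is to derive Lemma~\ref{lm:BFS-paths} directly from Lemma~\ref{lm:BFS} by choosing $W$ to be the disk-bounding walk it supplies, and by partitioning the \emph{subtree} $T_0$ of $T$ spanned by the $2g$ root-to-$a_i$/$b_i$ paths into vertical paths using a standard leaf-decomposition. Concretely, I would first apply Lemma~\ref{lm:BFS} to obtain the edges $a_1b_1,\dots,a_gb_g$ and the set $F_0$ together with the closed walk around the boundary of the disk $\Sigma \setminus F_0$; I take this walk as $W$ and the subtree $T_0\subseteq T$ as the union of the $2g$ paths from the root of $T$ to $a_1,\dots,a_g,b_1,\dots,b_g$. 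Every leaf of $T_0$ other than the root must coincide with some $a_i$ or $b_j$ (an interior vertex of one of those paths has at least two neighbors in $T_0$), hence $T_0$ has at most $2g$ leaves.

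Next I would partition $V(T_0)$ into vertical paths by the following greedy rule: at every internal vertex of $T_0$ select one child arbitrarily as a ``continuation'' and start a new path at each of the remaining children. The equivalence classes under ``is the continuation of'' are vertical paths $P_1,\dots,P_k$ (each class is a downward path in $T$, hence a vertical path of the BFS tree), and a direct count shows that $k$ equals the number of leaves of $T_0$, so $k\le 2g$. This already delivers the first two bullets: $W$ bounds a disk by Lemma~\ref{lm:BFS}, and $V(P_1)\cup\dots\cup V(P_k)=V(T_0)$ by construction.

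For the third bullet I would argue by a transition count on the cyclic vertex sequence of $W$. The walk $W$ only uses edges of $F_0$, and every edge of $F_0$ is traversed exactly twice in $W$ (since removing $F_0$ from the surface leaves a disk whose boundary is traced once). A ``segment boundary'' in $W$ occurs exactly when two consecutive vertices along $W$ lie in different parts $P_i$, $P_j$; such a pair is an edge of $F_0$ whose endpoints lie in different parts. These split into two types: tree edges of $T_0$ crossing between parts, and some of the $g$ chord edges $a_ib_i$. Since $T_0$ is a tree partitioned into $k$ connected pieces, contracting the pieces yields a tree on $k$ nodes, so exactly $k-1$ tree edges are inter-part; there are at most $g$ inter-part chords. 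Because each relevant edge contributes two boundaries,
\[
\#\text{segments} \;\le\; 2(k-1)+2g \;\le\; 2(2g-1)+2g \;=\; 6g-2 \;\le\; 6g-1.
\]

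The only real obstacle I expect is bookkeeping: one has to make sure that vertices visited multiple times by $W$ (corresponding to branching vertices of $F_0$) do not create additional segment boundaries beyond those counted above, and that the cyclic count of transitions really equals the number of monochromatic arcs (including the degenerate case $k=1$ with no inter-part chords, which yields a single segment and is still well within the bound). Once the correspondence ``transitions in $W$ $\leftrightarrow$ inter-part edges of $F_0$, each counted twice'' is made precise, the lemma follows.
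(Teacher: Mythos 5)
Your proof is correct and rests on the same skeleton as the paper's: same $T_0$, same boundary walk $W$ from Lemma~\ref{lm:BFS}, and essentially the same decomposition of $T_0$ into $k$ vertical paths, one per leaf of $T_0$ (the paper hard-codes your ``continuation'' choice by always continuing toward the leftmost leaf in the cyclic embedding order, but this makes no difference to the count). Where the two arguments genuinely diverge is in the third bullet. The paper counts segments by imagining the paths $P_1,\ldots,P_k$ being glued onto $T_0$ one at a time: attaching a new path adds two new segments (its left and right sides) and splits one existing segment, giving $3k-1$ in the case $k=2g$, and then adds one more segment for each of the $2g-k$ vertices among $a_1,\ldots,a_g,b_1,\ldots,b_g$ (counted with multiplicity) that are not new leaves, yielding $2g+2k-1\le 6g-1$. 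You instead count color changes along the cyclic walk $W$ directly, identify them with traversals of inter-part edges of $F_0$, and use three clean facts: contracting the $k$ connected parts of $T_0$ gives a tree on $k$ nodes, so exactly $k-1$ tree edges are inter-part; at most $g$ of the chords $a_ib_i$ are inter-part; and each edge of $F_0$ lies on the disk boundary exactly twice, so each contributes two color changes. This gives the (slightly sharper) bound $2(k-1)+2g\le 6g-2$, and the degenerate case of zero color changes yields a single segment, still within the bound. Your route is more direct and transparently optimal, at the cost of invoking explicitly the topological fact that cutting a genus-$g$ surface along $F_0$ into a single disk doubles every edge of $F_0$ on the boundary walk; the paper's inductive bookkeeping avoids stating this, but is otherwise a touch more ad hoc.
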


\begin{proof}
Fix a triangulation $G$ and a BFS spanning tree $T$.
Apply Lemma~\ref{lm:BFS} to get edges $a_1b_1,\ldots,a_gb_g$.
The tree $T_0$ is the subtree of $T$ formed
by the paths from the root to the vertices $a_1,\ldots,a_g$ and $b_1,\ldots,b_g$.
Let $W$ be the unique closed walk comprised of the edges of $T_0$ and the edges $a_1b_1,\ldots,a_gb_g$ that
forms the boundary of the surface homeomorphic to a disk obtained by removing the edges of $T_0$.
We refer to Figure~\ref{fig:BFS-torus} for illustration in the case of the torus.

\begin{figure}
\begin{center}
\epsfbox{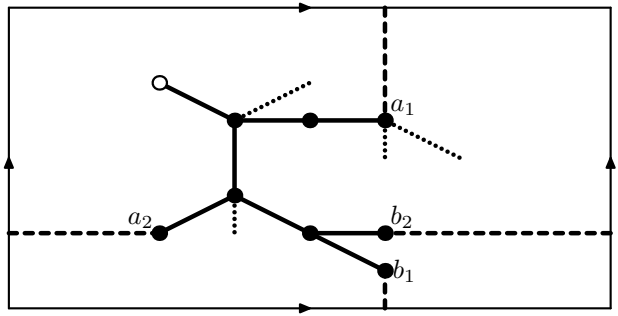}
\end{center}
\caption{A rooted tree $T_0$ and edges $a_1b_1$ and $a_2b_2$, which are drawn dashed,
         bounding a part of the torus that is homeomorphic to a disk
	 as in the proof of Lemma~\ref{lm:BFS-paths}.
	 Possible additional edges of the BFS spanning tree $T$ are drawn dotted.
         The root of the tree $T_0$ is depicted by an empty circle unlike the other vertices of $T_0$.}
\label{fig:BFS-torus}
\end{figure}

We next define paths $P_1,\ldots,P_k$.
The embedding of the tree $T_0$ in the surface induces a natural cyclic ordering at each vertex (regardless
whether the surface is orientable or not).
Hence, it is possible to order the leaves of the tree $T_0$ from left to right and
let $v_1,\ldots,v_k$ be the leaves of $T_0$ listed in this order.
Note that $k\le 2g$ since each of the leaves is one of the vertices $a_1,\ldots,a_g$ and $b_1,\ldots,b_g$.
The path $P_1$ is the path contained in $T_0$ from the leaf $v_1$ to the root of the tree $T_0$, and
the path $P_i$, $i=2,\ldots,k$,
is the path contained in $T_0$ from the leaf $v_k$ to the child of a vertex in $V(P_1)\cup\cdots\cup V(P_{i-1})$.
Note that some of the paths $P_2,\ldots,P_k$ may consist of the leaf vertex only.
We refer to Figure~\ref{fig:BFS-tree} for illustration.

\begin{figure}
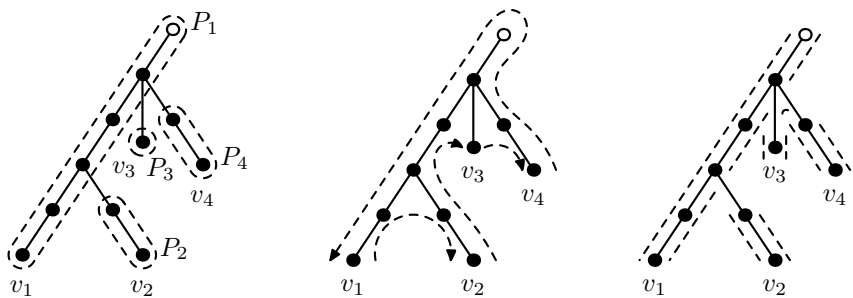

\begin{center}
\epsfbox{twing-1.mps}
\hskip 5ex
\epsfbox{twing-2.mps}
\hskip 5ex
\epsfbox{twing-3.mps}
\end{center}
\caption{The decomposition of the tree $T_0$ into paths $P_1,\ldots,P_4$ (the case $g=2$ and $k=4$),
         the illustration of the split into $2g=4$ segments covering the walk $W$ as in the proof of Lemma~\ref{lm:BFS-paths}, and
	 the collection of $11$ vertical paths covering $W$ obtained in the proof.
	 The root of the tree $T_0$ is depicted by an empty circle unlike the other vertices.}
\label{fig:BFS-tree}
\end{figure}

It remains to prove the last claim of the statement of the lemma.
We show that the closed walk $W$ can be split into $2g+2k-1$ segments,
each formed by vertices of exactly one of the paths $P_1,\ldots,P_k$.
Observe that the closed walk $W$ can be split into $2g$ segments,
each delimited by a pair of the vertices $a_1,\ldots,a_g$ and $b_1,\ldots,b_g$, and
each of these $2g$ segments
is a walk in the tree $T_0$ between the corresponding pair of vertices $a_1,\ldots,a_g$ and $b_1,\ldots,b_g$ that
is consistent with the cyclic order of edges at the vertices of $T_0$.
Figure~\ref{fig:BFS-tree} gives an illustration for the case $g=2$ and $k=4$,
i.e., all four vertices $a_1$, $a_2$, $b_1$ and $b_2$ are the leaves of $T_0$.

We first analyze the case $k=2g$, i.e., all segments are delimited by the leaves of the tree $T_0$.
Let us think of the tree $T_0$ as obtained by adding paths $P_1,\ldots,P_k$ sequentially one after another.
At the beginning, the closed walk around the path $P_1$ can be covered by two segments:
one following $P_1$ from $v_1$ to the root on the left and the other on the right.
When the path $P_i$ is added, we may think of it as adding two new segments following $P_i$ on the left and on the right and
splitting the segment containing the parent of the top vertex of $P_i$ into two segments that
overlap at the parent of the top vertex of the path $P_i$ (see Figure~\ref{fig:BFS-tree}).
Hence, three segments are added to the collection at each step and
so the number of segments needed to cover $W$ is $3k-1$.

Suppose now that some of the vertices $a_1,\ldots,a_g$ and $b_1,\ldots,b_g$ are not among the leaves of $T_0$ or
some of the leaves of $T_0$ correspond to multiple vertices among $a_1,\ldots,a_g$ and $b_1,\ldots,b_g$.
Observe that there are exactly $2g-k$ such vertices (when counting multiplicities) and
each of them splits one of the created segments into two.
Hence, the total number of segments is $3k-1+(2g-k)=2g+2k-1$,
which implies the bound in the statement of the lemma as $k\le 2g$.
\end{proof}

Lemma~\ref{lm:BFS-paths} forms one of the two key ingredients for the proof of Theorem~\ref{thm:product}.
The second relates to partitioning disk regions bounded by vertical paths.
Similarly to~\cite{DujJMMUW19,DujJMMUW20,UecWY22},
we make use of Sperner's Lemma, see e.g.~\cite{AigZ10,Spe80}.

\begin{lemma}
\label{lm:sperner}
Let $G$ be a near-triangulation.
Suppose that the vertices of $G$ are colored with three colors in such a way that
the vertices of each of the three colors on the outer face are consecutive,
i.e., they form a non-empty path.
There exists an inner face that contains one vertex of each of the three colors.
\end{lemma}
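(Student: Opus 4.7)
The plan is to apply the classical parity argument underlying Sperner's Lemma, adapted to the near-triangulation setting. I would call an edge of $G$ a \emph{$12$-edge} if one of its endpoints is colored $1$ and the other is colored $2$. The goal is to count, modulo two, the number of incidences between faces of $G$ and $12$-edges, and then deduce that the number of inner faces receiving all three colors on their boundary is odd, hence positive.

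First, I would analyze the contribution of the outer face. Since the vertices of each of the three colors on the outer cycle form a non-empty consecutive arc, traversing the outer cycle one passes through exactly three color-transition edges, one for each of the three unordered pairs of distinct colors. In particular, the boundary of the outer face contains exactly one $12$-edge. This is the only place where the consecutive-arc hypothesis on the outer face is used, and it is the delicate point of the argument; if it failed, the parity count would collapse.

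Next, for each inner (triangular) face, I would run a short case analysis on the multiset of colors appearing on its three vertices. Checking all possibilities, one sees that a trichromatic triangle (one vertex of each color) contributes exactly one $12$-edge to its boundary, whereas every other color pattern on a triangle (monochromatic, or bichromatic with any pair of colors) contributes either $0$ or $2$ such edges, i.e., an even number.

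Finally, since every edge of the planar graph $G$ lies on the boundary of exactly two faces, the total number of (face, $12$-edge) incidences is even. Summing the contributions over all faces yields the congruence $1 + T + 2M \equiv 0 \pmod 2$, where $T$ denotes the number of trichromatic inner faces and the term $2M$ records the (even) contributions of the remaining inner faces. Consequently $T$ is odd, so in particular $T \ge 1$, which is precisely the conclusion of the lemma.
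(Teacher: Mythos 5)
Your proof is correct, and it is the standard parity argument for Sperner's Lemma. Note, though, that the paper does not prove Lemma~\ref{lm:sperner} at all: it states it and cites the literature (\cite{AigZ10,Spe80}), so there is no in-paper proof to compare against. Your argument is precisely the classical one found in those sources, adapted to the near-triangulation phrasing: counting incidences between faces and $12$-edges, observing that the consecutive-arc hypothesis forces exactly one $12$-edge on the outer cycle, that a trichromatic inner triangle contributes one $12$-edge while any other inner triangle contributes zero or two, and that since $G$ is $2$-connected every edge borders exactly two faces, so the total incidence count is even and hence the number of trichromatic inner faces is odd. All steps are sound; in particular the appeal to $2$-connectivity (part of the paper's definition of near-triangulation) is exactly what justifies both that the outer boundary is a cycle and that each edge lies on two faces.
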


The proof of the next lemma follows the lines of the proof of~\cite[Lemma 8]{UecWY22};
we include a proof for completeness.

\begin{lemma}
\label{lm:tw7}
Let $G$ be a near-triangulation and
let $T$ be a BFS rooted spanning forest such that all roots of $T$ are on the outer face.
If the boundary cycle of the outer face can be partitioned into at most $6$ vertical paths,
say $P_1,\ldots,P_k$, $k\le 6$,
then there exists a collection $\PP$ of vertex-disjoint vertical paths such that
\begin{itemize}
\item the collection $\PP$ contains the paths $P_1,\ldots,P_k$,
\item every vertex of $G$ is contained in one of the paths in $\PP$, and
\item $G/\PP$ has a rooted tree-decomposition of width at most seven such that
      the root bag contains the vertices corresponding to the paths $P_1,\ldots,P_k$.
\end{itemize}
\end{lemma}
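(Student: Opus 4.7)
The plan is to prove the lemma by induction on $|V(G)|$, following the Sperner-based strategy of Ueckerdt, Wood and Yi~\cite{UecWY22} adapted to the larger number of allowed boundary paths.

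In the base case, when $G$ has no interior vertex, take $\PP := \{P_1, \ldots, P_k\}$. Then $G/\PP$ has at most six vertices, and a trivial one-bag tree-decomposition of width at most five satisfies all the required conditions.

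In the inductive step, when $G$ has at least one interior vertex, Sperner's lemma will split the near-triangulation into smaller pieces. The first step is to partition the cyclic sequence $P_1, \ldots, P_k$ into three consecutive non-empty groups $A$, $B$, $C$, each of size at most two (the degenerate cases with $k \le 2$ admit separate, straightforward arguments). Then color each boundary vertex by the color of its group and each interior vertex by the color of the first boundary vertex reached by following the BFS spanning forest $T$ upward; the coloring is well-defined since every root of $T$ lies on the outer face. Lemma~\ref{lm:sperner} then yields an inner face $f$ with three vertices $v_A, v_B, v_C$ of three distinct colors. For $X \in \{A, B, C\}$ let $Q_X$ denote the vertical sub-path of $T$ from $v_X$ up to its first boundary vertex $t_X$, which by the coloring lies on a path of group $X$. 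These three arms are pairwise vertex-disjoint, since any vertex shared by two of them would force the upward paths from that vertex to end at the same boundary vertex, contradicting the two distinct colors. Together with the triangle $f$, the three arms partition the disk into three sub-regions, and a case analysis on the positions of $t_A, t_B, t_C$ within their respective groups confirms that each sub-region is bounded by at most six vertical paths of $\PP$ (at most four arising from the original boundary arc plus the two arms), so the inductive hypothesis applies to yield collections $\PP_{XY}$ and tree-decompositions $\TT_{XY}$ of width at most seven.

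The hardest part will be combining the three sub-region tree-decompositions into a single tree-decomposition of width at most seven whose root bag contains $P_1, \ldots, P_k$. The root bag can accommodate only $8 - k$ additional elements, which is as few as two when $k = 6$; yet all three new arms $Q_A, Q_B, Q_C$ each appear on the boundaries of exactly two of the sub-regions and so each must satisfy the subtree condition across the bags separating those sub-regions. The intended resolution is to place $\{P_1, \ldots, P_k, Q_A, Q_B\}$ in the root bag (size at most eight), to attach the three sub-region tree-decompositions in a caterpillar-like configuration rather than as a star (so that the two sub-regions sharing $Q_C$ are attached as parent and child), and to route $Q_C$ through the root bag of the intermediate sub-region tree-decomposition along that connecting path. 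Fitting this routing within the width-seven bound will require a mild strengthening of the inductive statement, namely that the root bag of every recursive call can accommodate one additional marked vertical path; the strengthening will propagate through the induction because the sub-regions have at most six boundary paths, leaving slack for one more element in a bag of size at most eight. Verifying that the subtree condition is satisfied for every vertical path — original, arm, and marked — then completes the proof.
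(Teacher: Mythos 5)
Your base case and your handling of $k\le 5$ match the paper's argument (the paper colors $P_1$ red, $P_2$ green, and $P_3,\dots,P_k$ blue, so that the root bag $\{P_1,\dots,P_k,A,B,C\}$ has $k+3\le 8$ elements; your grouping into sets of size at most two works equally well there). The trouble is in the $k=6$ case, where you correctly identify the tension but propose a resolution that does not go through.

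First, the caterpillar arrangement you sketch violates the tree-decomposition condition. Say the two sub-regions sharing $Q_C$ are $G_{bc}$ and $G_{ca}$, and you hang the root of $\TT_{ca}$ below the root of $\TT_{bc}$. The boundary path $P_a$ (after replacing the part used in the recursion by the whole path, as the lemma requires) appears in the big root bag, in bags of $\TT_{ab}$, and in bags of $\TT_{ca}$, but not in $\TT_{bc}$; since the only route from the big root to $\TT_{ca}$ passes through $\TT_{bc}$, the bags containing $P_a$ do not form a subtree. Second, your proposed ``mild strengthening'' --- a spare slot in the root bag of every recursive call --- cannot propagate, because a sub-region can be bounded by exactly six vertical paths (for instance $a=1$, $b=4$ gives $G_{ab}$ bounded by parts of $P_1,P_4$, paths $P_2,P_3$, and the two arms), and then the recursive root bag is already full with eight elements; there is no slack.

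The paper's proof closes this gap without any strengthening by exploiting a quantitative fact that your proposal misses: with the color groups $\{P_1,P_2\}$, $\{P_3,P_4\}$, $\{P_5,P_6\}$, the indices $a\in\{1,2\}$, $b\in\{3,4\}$, $c\in\{5,6\}$ satisfy $(b-a)+(c-b)=c-a\ge 3$, so $\max\{b-a,c-b\}\ge 2$. Taking, say, $b-a\ge 2$, the paper makes the root bag $\{P_1,\dots,P_6,A,B\}$ and inserts a \emph{single new intermediate bag} $\{P_b,P_{b+1},\dots,P_6,P_1,\dots,P_a,A,B,C\}$ of size $10-(b-a)\le 8$ as a child of the root; the roots of $\TT_{bc}$ and $\TT_{ca}$ are then made children of this intermediate bag, and $\TT_{ab}$ is a direct child of the root. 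Because the intermediate bag contains $A$, $B$, $C$ and all boundary paths that $\TT_{bc}$ and $\TT_{ca}$ touch, every subtree condition is satisfied within width seven. You would need to replace your caterpillar-plus-strengthening plan by this (or an equivalent) intermediate-bag construction to complete the proof.
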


\begin{proof}
We proceed by induction on the number of inner vertices of $G$.
The base case is when $G$ has no inner vertices.
In this case, we simply set $\PP$ to be the set $\{P_1,\ldots,P_k\}$.
We next present the induction step.

Fix a near-triangulation $G$ and the paths $P_1,\ldots,P_k$ as described in the statement of the lemma.
We will say that an inner vertex $v$ of $G$ is \emph{reachable} from a path $P_i$
if the tree of $T$ that contains $v$ also contains $P_i$ and
the path $P_i$ is the first among the paths $P_1,\ldots,P_k$ hit
on the path from $v$ to the root of the tree of $T$ that contains $v$.

If $k\le 5$, we proceed as follows.
Color the vertices of the path $P_1$ and those reachable from this path through $T$ red,
the vertices of the path $P_2$ and those reachable from this path through $T$ green,
the vertices of the paths $P_3,\ldots,P_k$ and those reachable from these $k-2$ paths through $T$ blue.
By Lemma~\ref{lm:sperner}, there exists an inner face with a red vertex, a green vertex and a blue vertex.
Let $A$, $B$ and $C$ be the vertical paths from the path $P_1$, the path $P_2$ and a path $P_i$, $i\in\{3,\ldots,k\}$ to these three vertices.
The case $k=5$ is illustrated in Figure~\ref{fig:tw7a}.
We now apply induction to the inner triangulation delimited by parts of the paths $P_1$ and $P_2$ and the paths $A$ and $B$,
the inner triangulation delimited by parts of the paths $P_2$ and $P_i$ and the paths $P_3,\ldots,P_{i-1},B$ and $C$, and
the inner triangulation delimited by parts of the paths $P_1$ and $P_i$ and the paths $P_{i+1},\ldots,P_5,A$ and $C$.
The collection $\PP$ is the union of the three collections produced by induction
with parts of paths $P_1$, $P_2$ and $P_i$ replaced by the whole paths $P_1$, $P_2$ and $P_i$.
The rooted tree-decomposition of $G/\PP$ is obtained by creating a new root bag containing the $k+3\le 8$ paths $P_1,\ldots,P_k$ and $A,B,C$, and
making the root bags of the three tree-decompositions obtained by induction to be adjacent to this root bag.

\begin{figure}
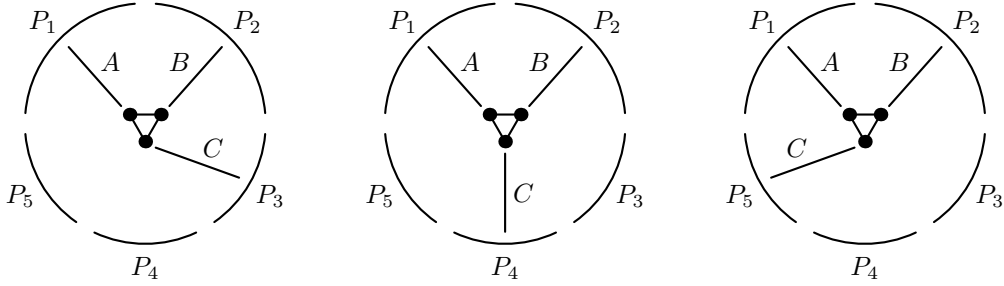

\begin{center}
\epsfbox{twing-5.mps}
\hskip 5ex
\epsfbox{twing-6.mps}
\hskip 5ex
\epsfbox{twing-7.mps}
\end{center}
\caption{The three possible cases where the path $C$ can lead to
         in the induction step in the proof of Lemma~\ref{lm:tw7} for $k=5$.}
\label{fig:tw7a}
\end{figure}

We now deal with the case $k=6$.
Color the vertices of the paths $P_1$ and $P_2$ and those reachable from these two paths through $T$ red,
the vertices of the paths $P_3$ and $P_4$ and those reachable from these two paths through $T$ green,
the vertices of the paths $P_5$ and $P_6$ and those reachable from these two paths through $T$ blue.
Let $A$, $B$ and $C$ be the vertical paths from the paths $P_1,\ldots,P_k$ to these three vertices, and
let $a$, $b$ and $c$ be the indices such that
the path $A$ starts at a vertex adjacent to the path $P_a$,
$B$ starts at a vertex adjacent to the path $P_b$, and
$C$ starts at a vertex adjacent to the path $P_c$.
Note that at least one of the following two cases holds: $b-a\ge 2$ or $c-b\ge 2$.
By symmetry, we assume that $b-a\ge 2$ in the rest.
The setting is also illustrated in Figure~\ref{fig:tw7b}.

\begin{figure}
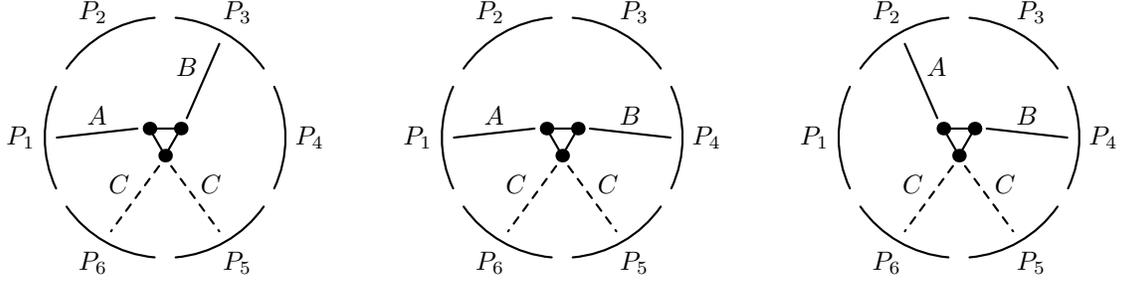

\begin{center}
\epsfbox{twing-8.mps}
\hskip 5ex
\epsfbox{twing-9.mps}
\hskip 5ex
\epsfbox{twing-10.mps}
\end{center}
\caption{The three possible cases of the values $a$ and $b$ in the induction step in the proof of Lemma~\ref{lm:tw7} for $k=6$
         under the assumption $b-a\ge 2$.}
\label{fig:tw7b}
\end{figure}

We now apply induction to the inner triangulation $G_{ab}$ delimited by parts of the paths $P_a$ and $P_b$, the paths $P_{a+1},\ldots,P_{b-1},A$ and $B$,
the inner triangulation $G_{bc}$ delimited by parts of the paths $P_b$ and $P_c$, the paths $P_{b+1},\ldots,P_{c-1},B$ and $C$, and
the inner triangulation $G_{ca}$ delimited by parts of the paths $P_c$ and $P_a$, the paths $P_{c+1},\ldots,P_{a-1},C$ and $A$.
The collection $\PP$ is the union of the three collections produced by induction
with parts of paths $P_a$, $P_b$ and $P_c$ replaced by the whole paths $P_a$, $P_b$ and $P_c$.

The sought rooted tree-decomposition of $G/\PP$ is obtained as follows.
First create a root bag containing the eight paths $P_1,\ldots,P_k$ and $A$ and $B$.
The root will have two children.
One of the children is the root bag of the rooted tree-decomposition of $G_{ab}$ obtained by induction and
the rooted tree-decomposition of $G_{ab}$ forms a subtree rooted at this bag;
note that this bag contains the paths $P_a,\ldots,P_b$ and $A$ and $B$.
The other child is a new bag containing the (at most eight) paths $P_b,\ldots,P_a$ and $A$, $B$ and $C$;
this bag will have two children.
One of them is the root bag of the rooted tree-decomposition of $G_{bc}$ and
the other is the root bag of the rooted tree-decomposition of $G_{ca}$,
which both have been obtained by induction;
the rooted tree-decompositions of $G_{bc}$ and $G_{ca}$ form subtrees rooted at these two bags.
As the obtained rooted tree-decomposition of $G/\PP$ has width at most seven and
has the properties given in the statement of the lemma,
the proof of the lemma is finished.
\end{proof}

The next lemma is the last ingredient needed to prove the main result of this section,
which is Theorem~\ref{thm:product}.

\begin{lemma}
\label{lm:twout}
Let $G$ be a near-triangulation and
let $T$ be a BFS rooted spanning forest such that all roots of $T$ are on the outer face.
If the boundary cycle of the outer face can be partitioned into $k\ge 6$ vertical paths,
then there exist a $2$-connected subgraph $G'$ of $G$ and a collection $\PP$ of vertex-disjoint vertical paths such that
\begin{itemize}
\item $\PP$ contains all vertical paths bounding the outer face,
\item $\PP$ contains at most $\max\{6,6k-32\}$ paths,
\item the vertex set of $G'$ is the union of the vertex sets of the paths contained in $\PP$,
\item the graph $G'$ contains the boundary of the outer face,
\item the graph $G'$ has at most $\max\{1,3k-18\}$ inner faces, and
\item each of the inner faces of $G'$ is bounded by at most six paths contained in $\PP$.
\end{itemize}
\end{lemma}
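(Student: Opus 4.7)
Plan. I prove Lemma~\ref{lm:twout} by induction on $k$. The base case $k\le 6$ is immediate: take $G'$ to be the outer boundary cycle and $\PP=\{P_1,\ldots,P_k\}$, yielding exactly one inner face bounded by $k\le 6$ paths from $\PP$, and all stated bounds are satisfied (the bound $\max\{6,6k-32\}=6$ for $|\PP|$ and $\max\{1,3k-18\}=1$ for the number of inner faces).

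For the inductive step $k\ge 7$, I apply Sperner's Lemma (Lemma~\ref{lm:sperner}) to a carefully chosen $3$-coloring of the vertices of $G$: each vertex receives a color according to which of three consecutive arcs of $\{P_1,\ldots,P_k\}$ is first reached by the $T$-ancestor path from the vertex. Sperner's Lemma then gives an inner face $F$ of $G$ with one vertex $r,g,b$ of each color; following $T$-ancestor paths from these vertices to the three arcs produces vertical paths $A,B,C$. Adding $A,B,C$ to $\PP$ and adding to $G'$ the edges of $F$ together with the edges that connect the tops of $A,B,C$ to the boundary subdivides the near-triangulation into the triangular face $F$ (visiting just $A,B,C$) and three ``slices''; each slice is bounded by a subrange of $\{P_1,\ldots,P_k\}$ together with two of $\{A,B,C\}$ and one edge of $F$.

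By a judicious choice of the three arcs (and possibly by merging the triangular face with one slice by omitting one edge of $F$ from $G'$), I arrange that each resulting subregion either has at most $6$ bounding paths (and becomes an inner face of $G'$ directly) or is a smaller near-triangulation, to which the induction hypothesis applies; for the latter, the subregion inherits the BFS spanning forest $T$ augmented by one of $A,B,C$ as a new root if needed, so that all roots lie on its outer face. Combining the resulting structures yields $\PP$ and $G'$ for $G$.

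The main obstacle is bounding the totals: each Sperner step adds $3$ paths to $\PP$ and introduces $2$ or $3$ inner faces (depending on whether we merge), so I must verify that the linear slopes of the bounds $6k-32$ and $3k-18$ are respected. The hardest case is when the spokes produced by Sperner land in the middle of the outer paths rather than at the corners, producing a remainder whose boundary visits $k$ distinct paths (counting the partial pieces of $P_i$'s split by the spokes' landings); in this case the induction cannot be applied directly to reduce $k$, so one must either iterate Sperner with rotated colorings until a favorable landing is achieved or use a secondary argument based on the structure of $T$ to ensure progress.
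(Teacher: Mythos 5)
Your proposal correctly identifies the overall strategy (induction on $k$, Sperner's Lemma to produce vertical ``spokes'' $A,B,C$ subdividing the region), and you correctly flag what you call the ``main obstacle,'' but you then leave exactly that obstacle unresolved. The sentence ``one must either iterate Sperner with rotated colorings until a favorable landing is achieved or use a secondary argument based on the structure of $T$ to ensure progress'' is a placeholder for the actual content of the proof, not a proof.

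Concretely, there are two gaps. First, the arithmetic of the recursion depends crucially on how many spokes and how many subregions you create. You propose to add all three of $A,B,C$ to $G'$, carving out a triangular face $F$ plus three slices; this puts four subregions and three new paths into the ledger per step. The paper's argument for $k\ge 9$ adds only \emph{two} spokes $A,B$ to produce \emph{two} subregions bounded by $k_1$ and $k_2 = k+6-k_1$ paths, and the linear bounds $6k-32$ and $3k-18$ are derived precisely from the identity $k_1+k_2 = k+6$ via $|\PP|\le k+2 + (5k_1-32)+(5k_2-32) = 6k-32$; an extra spoke and extra subregion per step would blow the slope. You would need to redo this bookkeeping for your four-way split and there is no reason to expect it to close. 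Second, to guarantee that both subregions have strictly fewer than $k$ boundary paths (so the induction makes progress) and at least $7$ (so the linear bound applies rather than the base case), the paper uses a \emph{specific} coloring: for $k\ge 9$, exactly three paths red, exactly three green, the rest blue, so the red and green attachment points $a,b$ are confined to small windows and one of $\ell_{ab},\ell_{bc},\ell_{ca}$ is $\ge 4$ unless $k=9$ and all are $=3$ (a case that is then handled directly). Your ``three consecutive arcs'' is unspecified, and without this constraint there is nothing forcing $4\le k_1-3\le k-4$, which is what the induction needs. Finally, the small cases $k=7$ and $k=8$ really do need a separate treatment: when the Sperner face lands so that one slice has $7$ boundary paths, the paper performs a \emph{second} Sperner application with a refined coloring inside that slice and verifies by hand that the resulting $\PP$ has exactly $10$ (resp.\ at most $14$) paths. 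Your proposal gestures at ``iterating Sperner with rotated colorings'' but gives no termination argument and no bound check, and in fact the paper does not iterate indefinitely --- it applies Sperner at most twice at each level.

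A minor point: the lemma hypothesis is $k\ge 6$, so the base case is $k=6$ only, not $k\le 6$.
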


\begin{proof}
The proof proceeds by induction on $k$.
The $k$ vertical paths bounding the outer face are denoted by $P_1,\ldots,P_k$.
We distinguish four cases depending on the value of $k$: $k=6$, $k=7$, $k=8$ and $k\ge 9$.
If $k=6$, we just set $G'$ to be the cycle bounding the outer face and $\PP=\{P_1,\ldots,P_6\}$.

We next analyze the case $k=7$.
Color the vertices of the paths $P_1$, $P_2$ and $P_3$ and those reachable from these three paths through $T$ red,
the vertices of the paths $P_4$ and $P_5$ and those reachable from these two paths through $T$ green, and
the remaining vertices,
i.e., the vertices of the paths $P_6$ and $P_7$ and those reachable from these two paths through $T$, blue.
The meaning of being \emph{reachable} as in the proof of Lemma~\ref{lm:tw7}.
By Lemma~\ref{lm:sperner}, there exists an inner face with a red vertex, a green vertex and a blue vertex.
Let $A$, $B$ and $C$ be the vertical paths from the paths $P_1,\ldots,P_k$ to these three vertices, and
let $a$, $b$ and $c$ be the indices such that
the path $A$ starts at a vertex adjacent to the path $P_a$,
$B$ starts at a vertex adjacent to the path $P_b$, and
$C$ starts at a vertex adjacent to the path $P_c$.
The subgraph $G'$ formed by the vertices of the paths $P_1,\ldots,P_7$ and the paths $A,B,C$, and
the set $\PP=\{P_1,\ldots,P_7,A,B,C\}$ (note that $|\PP|=10$) satisfies the statement of the lemma
unless $(a,b)=(1,5)$ or $(a,c)=(3,6)$ (see Figure~\ref{fig:twout1}).
As the two cases are symmetric, we analyze the former only.

\begin{figure}
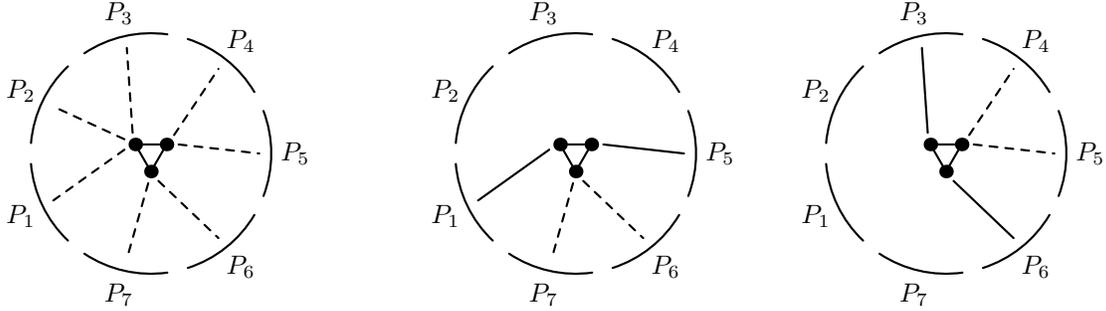

\begin{center}
\epsfbox{twing-11.mps}
\hskip 8ex
\epsfbox{twing-12.mps}
\hskip 4ex
\epsfbox{twing-13.mps}
\end{center}
\caption{The initial step of the case $k=7$ in the proof of Lemma~\ref{lm:twout};
         possible paths $A$, $B$ and $C$ are drawn dashed.
         The two cases when a further argument is needed are depicted in the right.}
\label{fig:twout1}
\end{figure}

\begin{figure}
\begin{center}
\epsfbox{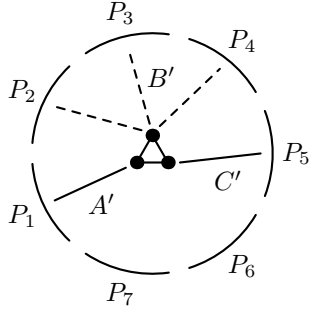}
\end{center}
\caption{The paths $A'$, $B'$ and $C'$ obtained in the second step of the case $k=7$ in the proof of Lemma~\ref{lm:twout}
         assuming $a=1$ and $b=5$.}
\label{fig:twout2}
\end{figure}

Color the vertices of the paths $P_2,\ldots,P_4$ and those reachable from these three paths through $T$ yellow,
the vertices of the path $P_1$ and those reachable from this path through $T$ magenta, and
the vertices of the path $P_5$ and those reachable from this path through $T$ cyan.
We apply Lemma~\ref{lm:sperner} to the subgraph of $G$ bounded by the paths $P_1,\ldots,P_5$ and
the paths $A$ and $B$ (note that all vertices of $A$ are colored with magenta and all vertices of $B$ with cyan).
This subgraph contains an inner face with a magenta vertex, a cyan vertex and a yellow vertex, and
let $A'$, $B'$ and $C'$ be the vertical paths leading from a vertex of $V(P_1)$,
a vertex of $V(P_5)$ and a vertex of $V(P_2)\cup V(P_3)\cup V(P_4)$ to these three vertices;
see Figure~\ref{fig:twout2}.
Since the subgraph $G'$ formed by the vertices of the paths $P_1,\ldots,P_7$ and the paths $A',B',C'$, and
the set $\PP=\{P_1,\ldots,P_7,A',B',C'\}$ (note that $|\PP|=10=6\cdot 7-32$) satisfies the statement of the lemma,
the analysis of the case $k=7$ is finished.

\begin{figure}
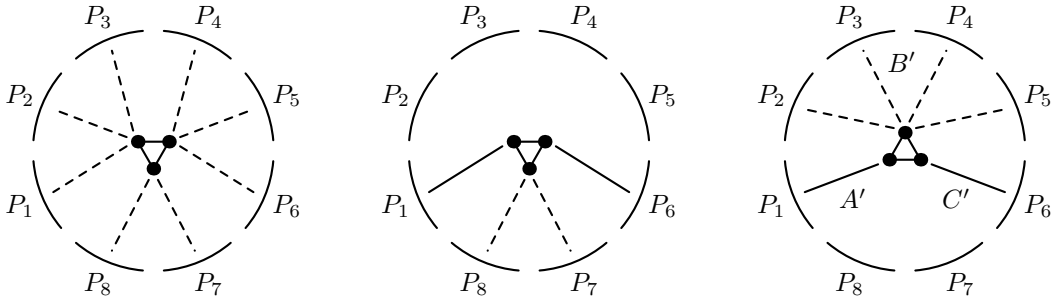

\begin{center}
\epsfbox{twing-15.mps}
\hskip 5ex
\epsfbox{twing-16.mps}
\hskip 5ex
\epsfbox{twing-17.mps}
\end{center}
\caption{The initial step of the case $k=8$ in the proof of Lemma~\ref{lm:twout};
         possible paths $A$, $B$ and $C$ are drawn dashed.
	 The case $(a,b)=(1,6)$ requiring a special argument is depicted in the middle, and
	 the paths obtained in this case in the right.}
\label{fig:twout3}
\end{figure}

We next analyze the case $k=8$.
Color the vertices of the paths $P_1,\ldots,P_3$ and those reachable from these paths through $T$ red,
the vertices of the paths $P_4,\ldots,P_6$ and those reachable from these paths through $T$ green, and
the remaining vertices,
i.e., the vertices of the paths $P_7$ and $P_8$ and those reachable from these paths through $T$, blue.
By Lemma~\ref{lm:sperner}, there exists an inner face with a red vertex, a green vertex and a blue vertex.
Let $A$, $B$ and $C$ be the vertical paths from the paths $P_1,\ldots,P_k$ to these three vertices, and
let $a$, $b$ and $c$ be the indices such that
the path $A$ starts at a vertex adjacent to the path $P_a$,
$B$ starts at a vertex adjacent to the path $P_b$, and
$C$ starts at a vertex adjacent to the path $P_c$.
See Figure~\ref{fig:twout3}.
Unless $a=1$ and $b=6$, we proceed as follows.
If each of the three near-triangulations delimited by the paths $P_1,\ldots,P_8$ and the paths $A$, $B$ and $C$
is bounded by at most six of these paths,
we set $G'$ to be the subgraph formed by the vertices of the paths $P_1,\ldots,P_8$ and the paths $A,B,C$, and
$\PP={P_1,\ldots,P_8,A,B,C}$.
Since $|\PP|=11<16=6\cdot 8-32$ and $G'$ has three inner faces, the statement of the lemma holds.
Otherwise, exactly one of the three near-triangulations delimited by the paths $P_1,\ldots,P_8$ and the paths $A$, $B$ and $C$
is bounded by seven of these paths and the remaining two by at most six of these paths.
We apply induction to the near-triangulation delimited by seven of the paths $P_1,\ldots,P_8$ and the paths $A$, $B$ and $C$, and
we set $G'$ to be the subgraph formed by the vertices of the paths $P_1,\ldots,P_8$, the paths $A,B,C$ and the subgraph obtained by induction, and
the set $\PP$ to contain the paths $P_1,\ldots,P_8$, the paths $A$, $B$ and $C$, and
the at most three additional paths obtained by induction.
Note that $G'$ has at most five inner faces and the set $\PP$ contains at most $11+3=14\le 16$ paths.

We next assume that $a=1$ and $b=6$.
Color the vertices of the paths $P_2,\ldots,P_4$ and those reachable from these three paths through $T$ yellow,
the vertices of the path $P_1$ and those reachable from this path through $T$ magenta, and
the vertices of the path $P_6$ and those reachable from this path through $T$ cyan.
Note that all vertices of $A$ are colored with magenta and all vertices of $B$ with cyan.
We apply Lemma~\ref{lm:sperner} to the subgraph of $G$ bounded by the paths $P_1,\ldots,P_5$ and
the paths $A$ and $B$ (see Figure~\ref{fig:twout3}).
This subgraph contains an inner face with a magenta vertex, a cyan vertex and a yellow vertex, and
let $A'$, $B'$ and $C'$ be the vertical paths leading from a vertex of $V(P_1)$,
a vertex $V(P_6)$ and a vertex of $V(P_2)\cup\cdots\cup V(P_5)$ to these three vertices.
If the path $C'$ leads to a vertex of $V(P_3)\cup V(P_4)$,
then we set $G'$ to be the subgraph formed by the vertices of the paths $P_1,\ldots,P_8$ and the paths $A'$, $B'$ and $C'$ and
the set $\PP$ to be the set $\{P_1,\ldots,P_8,A',B',C'\}$.
Since $G'$ has three inner faces and $\PP$ contains $11$ paths,
the statement of the lemma follows.
If the path $C'$ leads to a vertex of $V(P_2)\cup V(P_5)$, we may assume that it leads to a vertex of $V(P_5)$ by symmetry.
We apply induction to the near-triangulation delimited by the paths $P_1,\ldots,P_5$ and the paths $A'$ and $C'$, and
we set $G'$ to be the subgraph formed by the vertices of the paths $P_1,\ldots,P_8$, the paths $A',B',C'$ and the subgraph obtained by induction, and
the set $\PP$ to contain the paths $P_1,\ldots,P_8$, the paths $A'$, $B'$ and $C'$ and
the (at most three) additional paths obtained by induction.
Since $G'$ has at most five inner faces and $\PP$ contains at most $11+3=14\le 16$ paths,
the statement of the lemma follows.

\begin{figure}
\begin{center}
\epsfbox{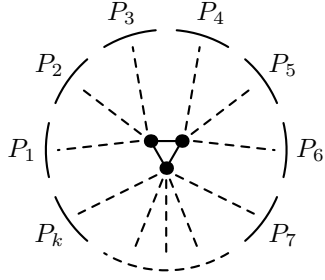}
\end{center}
\caption{The initial step of the case $k\ge 9$ in the proof of Lemma~\ref{lm:twout};
         possible paths $A$, $B$ and $C$ are drawn dashed.}
\label{fig:twout4}
\end{figure}

We now analyze the general case $k\ge 9$.
Color the vertices of the paths $P_1,\ldots,P_3$ and those reachable from these paths through $T$ red,
the vertices of the paths $P_4,\ldots,P_6$ and those reachable from these paths through $T$ green, and
the remaining vertices,
i.e., the vertices of the paths $P_7,\ldots,P_k$ and those reachable from these paths through $T$, blue.
By Lemma~\ref{lm:sperner}, there exists an inner face with a red vertex, a green vertex and a blue vertex.
Let $A$, $B$ and $C$ be the vertical paths from the paths $P_1,\ldots,P_k$ to these three vertices, and
let $a$, $b$ and $c$ be the indices such that
the path $A$ starts at a vertex adjacent to the path $P_a$,
$B$ starts at a vertex adjacent to the path $P_b$, and
$C$ starts at a vertex adjacent to the path $P_c$.
See Figure~\ref{fig:twout4}.
Let $\ell_{ab}=b-a$, $\ell_{bc}=c-b$ and $\ell_{ca}=k+a-c$.
Since $\ell_{ab}+\ell_{bc}+\ell_{ca}=k\ge 9$,
one of $\ell_{ab}$, $\ell_{bc}$ and $\ell_{ca}$ is at least four
unless $\ell_{ab}=\ell_{bc}=\ell_{ca}=3$ and so $k=9$.
If $\ell_{ab}=\ell_{bc}=\ell_{ca}=3$,
we set $G'$ to be the subgraph formed by the vertices of the paths $P_1,\ldots,P_9$ and the paths $A,B,C$, and
$\PP={P_1,\ldots,P_9,A,B,C}$.
Since $|\PP|=12$ and $G'$ has three inner faces, each bounded by six of the paths contained in $\PP$,
the statement of the lemma holds.

In the rest of the proof,
we assume that at least one of $\ell_{ab}$, $\ell_{bc}$ and $\ell_{ca}$ is at least four.
If $\ell_{ab}\ge 4$,
we apply induction to the near-triangulation bounded by the paths $P_a,\ldots,P_b$ and the paths $A$ and $B$, and
to the near-triangulation bounded by paths $P_b,\ldots,P_k,P_1,\ldots,P_a$ and the paths $A$ and $B$.
If $\ell_{ab}<4$ but $\ell_{bc}\ge 4$,
we apply induction to the near-triangulation bounded by the paths $P_b,\ldots,P_c$ and the paths $B$ and $C$, and
to the near-triangulation bounded by paths $P_c,\ldots,P_k,P_1,\ldots,P_b$ and the paths $B$ and $C$.
Otherwise,
we apply induction to the near-triangulation bounded by the paths $P_a,\ldots,P_c$ and the paths $A$ and $C$, and
to the near-triangulation bounded by paths $P_c,\ldots,P_k,P_1,\ldots,P_a$ and the paths $B$ and $C$.
In each of these three cases,
we set the sought graph $G'$ to be the union of the two graphs obtained by induction and
$\PP$ to be the union of the obtained sets of vertical paths.

It remains to estimate the size of $\PP$ and the number of faces of $G'$.
Let $k_1$ be the number of paths bounding the former of the two near-triangulations to that the induction has been applied, and
let $k_2$ be the number of paths bounding the latter.
Observe that $k_1+k_2=k+6$ and both $k_1$ and $k_2$ are at least seven.
By induction,
the set $\PP$ contains at most $k+2+(5k_1-32)+(5k_2-32)=6k+2+30-2\cdot 32=6k-32$ vertical paths and
the number of inner faces of $G'$ is at most $(3k_1-18)+(3k_2-18)=3k+18-36=3k-18$.
The proof of the lemma is now completed.
\end{proof}

We are now ready to prove the main result of this section, which implies Theorem~\ref{thm:prodnew}.

\begin{theorem}
\label{thm:product}
Let $G$ be a triangulation of a surface of Euler genus $g>0$ and let $T$ be a BFS spanning tree of $G$.
The tree $T$ can be vertex-partitioned to a collection $\PP$ of vertex-disjoint vertical paths such that
the graph $G/\PP$ has a rooted tree-decomposition with the following properties:
\begin{itemize}
\item the root bag has size at most $\max\{6,32g-27\}$,
\item the root bag has at most $6\cdot\max\{1,18g-21\}$ children, and
\item every bag except the root bag has size at most $8$.
\end{itemize}
Moreover, every subtree $\TT'$ of the tree-decomposition rooted at a child of the root satisfies the following:
\begin{itemize}
\item the bags of $\TT'$ contain at most six paths that are contained in the root bag, and
\item if $P_1,\ldots,P_k$ are all paths that are contained in the bags of $\TT'$ but not in the root bag,
      the subgraph induced by $V(P_1)\cup\cdots\cup V(P_k)$
      has a component joined by an edge to each of the paths that are contained both in the root bag and in $\TT'$.
\end{itemize}      
\end{theorem}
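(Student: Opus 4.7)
The plan is to prove Theorem~\ref{thm:product} by chaining Lemmas~\ref{lm:BFS-paths}, \ref{lm:twout} and~\ref{lm:tw7} together.

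First, I would apply Lemma~\ref{lm:BFS-paths} to obtain the closed walk $W$, the subtree $T_0$ of $T$ whose vertex set splits into $k \le 2g$ vertical paths $P_1,\ldots,P_k$, and the decomposition of $W$ into $6g-1$ segments, each a subpath of exactly one $P_i$. Cutting the surface along the edges of $F_0$ (the edges of $T_0$ together with the $g$ edges $a_ib_i$) yields a near-triangulation $G^*$ whose outer boundary is the cycle $W$, partitioned into $6g-1$ vertical paths; vertices and edges of $T_0$ are duplicated in $G^*$ according to their multiplicity on $W$. The spanning tree $T$ remains a BFS spanning tree of $G^*$ with its root on the outer boundary (since the root lies in $T_0\subseteq W$), so Lemmas~\ref{lm:twout} and~\ref{lm:tw7} are applicable to $G^*$.

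Next, I would apply Lemma~\ref{lm:twout} to $G^*$ with the $6g-1$ boundary segments as the bounding vertical paths (for $g=1$, where the boundary consists of only $5$ paths, Lemma~\ref{lm:tw7} is applied directly to $G^*$ and the entire tree-decomposition is produced in one step). This gives a $2$-connected subgraph $G'$ containing the outer boundary together with a collection $\PP_0$ of at most $\max\{6,\,6(6g-1)-32\}$ vertical paths partitioning $V(G')$, and $G'$ has at most $\max\{1,\,18g-21\}$ inner faces, each bounded by at most six paths of $\PP_0$. I would then apply Lemma~\ref{lm:tw7} to the near-triangulation delimited by each inner face of $G'$ to obtain, per face, a rooted tree-decomposition of width at most seven whose root bag contains exactly the at most six paths bounding that face.

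The final rooted tree-decomposition of $G/\PP$ is then assembled by creating a root bag that contains the paths of $\PP_0$, with each boundary segment replaced by the full original path $P_i$ of which it is a subpath; this identification precisely reverses the initial cut, so that $\PP$ becomes a partition of $V(G)$ rather than of $V(G^*)$. The root bag therefore has size at most $2g$ plus the number of interior paths of $\PP_0$, which yields the claimed linear-in-$g$ bound, while all other bags, coming from the per-face applications of Lemma~\ref{lm:tw7}, have size at most eight. The children of the root are the root bags of the tree-decompositions produced for the inner faces of $G'$, so their number is controlled by the face count of $G'$. The two ``moreover'' conditions are immediate from the construction: each child-subtree corresponds to a single face of $G'$, which is bounded by at most six paths of $\PP_0$, and the interior of this face is connected in $G^*$, hence in $G$, and is adjacent to every bounding path because the face is a $2$-cell bounded by a cycle of $G^*$.

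The principal obstacle is the bookkeeping around the cut: a single path $P_i$ may correspond to several boundary segments in $G^*$, and the identifications used to collapse these segments back to $P_i$ must be tracked consistently so that $\PP$ partitions $V(G)$, the assembled tree-decomposition remains valid in $G/\PP$, and each path (interior or boundary) appears in a connected subtree. Aligning the estimates one obtains this way with the precise constants $\max\{6,\,32g-27\}$ and $6\cdot\max\{1,\,18g-21\}$ stated in the theorem will also require attention to degenerate cases (notably small $g$ and small face counts), but no new combinatorial idea beyond what is provided by the three lemmas.
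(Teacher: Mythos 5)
Your overall strategy matches the paper's: apply Lemma~\ref{lm:BFS-paths} to cut the surface open along $W$, apply Lemma~\ref{lm:twout} to split the resulting near-triangulation into faces bounded by at most six paths, apply Lemma~\ref{lm:tw7} per face, assemble the tree-decomposition with a new root bag containing $\PP_0$, and finally replace boundary segments by the full paths $P_1,\ldots,P_k$. That chain of lemmas is exactly the paper's proof, and your awareness of the segment-identification bookkeeping is on point.

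However, there is a genuine gap in your treatment of the second ``moreover'' condition. You claim it is ``immediate from the construction'' because ``the interior of this face is connected in $G^*$, hence in $G$, and is adjacent to every bounding path''. This is false in general: a face of $G'$ bounds a near-triangulation in $G^*$ whose boundary cycle may have \emph{chords}, and a chord splits the interior vertices into pieces lying in different sub-regions of the disk; those pieces are not adjacent to each other (they can only reach one another through boundary vertices, which are excluded from the induced subgraph), so the interior subgraph can have several components, none of which touches every bounding path. The paper resolves exactly this issue: if some face of $G'$ violates the condition, there must exist a separating chord, and it repeatedly inserts such chords, producing a $2$-connected graph $G''$ with up to $6f$ (not $f$) inner faces, each still bounded by at most six paths and now also satisfying the single-component condition. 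Only then is Lemma~\ref{lm:tw7} applied to each face of $G''$. This is precisely why the theorem bounds the number of children by $6\cdot\max\{1,18g-21\}$ rather than $\max\{1,18g-21\}$; your version, having skipped the chord-insertion, would (incorrectly) claim the better bound while failing the ``moreover'' clause. The same omission affects your $g=1$ shortcut of applying Lemma~\ref{lm:tw7} directly to $G^*$: without first inserting chords, the component condition need not hold.
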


\begin{proof}
Fix a triangulation $G$ of a surface of Euler genus $g>0$ and a BFS spanning tree $T$ of $G$.
We apply Lemma~\ref{lm:BFS-paths} to obtain a closed walk $W$, a subtree $T_0$ of $T$ and
$k$ vertex-disjoint vertical paths $P_1,\ldots,P_k$, $k\le 2g$, with the properties given in Lemma~\ref{lm:BFS-paths}.
Let $\ell$ be the number segments that cover the walk $W$ such that each segment
is formed by vertices of exactly one of the paths $P_1,\ldots,P_k$;
note that $\ell\le 6g-1$ by Lemma~\ref{lm:BFS-paths}.

We first deal with the general case $\ell\ge 7$ (note that if $\ell\ge 7$, then $g\ge 2$).
We apply Lemma~\ref{lm:twout} to the near-triangulation bounded by the closed walk $W$,
the BFS spanning forest obtained from $T$ by duplicating the vertices contained in $W$, and
the $\ell$ vertical paths that corresponds to the segments covering the walk $W$.
We obtain a collection $\PP_0$ of vertex-disjoint vertical paths that contains at most $5\ell-32$ additional vertical paths and
a $2$-connected subgraph $G'$ of $G$ such that each inner face of $G'$ is bounded by at most six paths contained in $\PP_0$.
In addition, the number of inner faces of $G'$, which we denote $f$ further, is at most $3\ell-18$.
Since $\ell\le 6g-1$, we obtain that $\PP_0$ contains at most $30g-27$ additional vertical paths and
the number $f$ of faces of $G'$ is at most $18g-21$.
We now replace in the collection $\PP_0$ the subpaths of $P_1,\ldots,P_k$ that cover the closed walk $W$
with the paths $P_1,\ldots,P_k$.
Hence, the size of the collection $\PP_0$ is at most $32g-27$ (note that $k\le 2g$) and
each of the faces of $G'$ is still bounded by at most six subpaths of the vertical paths contained in $\PP$ (possibly by multiple subpaths of the same vertical path).

We will soon proceed jointly for the cases $\ell\ge 7$ and $\ell\le 6$.
To be able to do so, in the case $\ell\le 6$ (and so $k\le 6$),
we set $\PP_0$ to to be the collection $\{P_1,\ldots,P_k\}$ and
$G'$ the graph consisting of a single cycle corresponding to the closed walk $W$;
note that the only face of $G'$ bounds a near-triangulation in $G$ and $f=1$.

We now proceed jointly for both cases.
If there is a face of $G'$ such that the subgraph of $G$ induced by the vertices of $G$ inside this face of $G'$
does not have a component joined by an edge to each of the (at most six) paths from $\PP_0$ bounding the face,
then two vertices on the boundary of this face of $G'$
must be joined by a chord separating the corresponding parts of the interior of the face.
By repeatedly adding such separating chords to $G'$,
we obtain a $2$-connected graph $G''$ with $f'\le 6f$ non-empty inner faces,
each bounded by at most six paths contained in $\PP_0$, and such that
the subgraph of $G$ induced by vertices of $G$ inside any face of $G''$
has a component joined by an edge to each of the (at most six) paths from $\PP_0$ bounding the face.
We now apply Lemma~\ref{lm:tw7} to each of the $f'$ near-triangulations bounded by the non-empty faces of $G''$ and
obtain rooted tree-decompositions $\TT_1,\ldots,\TT_{f'}$ with width at most seven of each them.
Let $\PP$ be the collection of vertical paths obtained from $\PP_)$
by including all additional vertical paths obtained by these $f'$ applications of Lemma~\ref{lm:tw7}.

We now construct a rooted tree-decomposition of $G/\PP$ with the properties given in the statement of the theorem.
We introduce a new root bag containing the vertices corresponding to the paths contained in $\PP_0$ and
make the roots of the rooted tree-decompositions $\TT_1,\ldots,\TT_{f'}$ its children.
The root bag of the resulting tree-decomposition has size $|\PP_0|\le\max\{6,32g-27\}$,
it has $f'\le 6f\le 6\max\{1,18g-21\}$ children, and all bags except the root has size at most $8$.
Consider now a subtree $\TT'$ rooted at a child of the root.
The only paths from $\PP_0$ contained in the bags of $\TT'$ are those bounding the face of $G''$ that corresponds to $\TT'$,
i.e., there are at most six such paths, and
the vertices contained in the paths of the bags of $\TT'$ but not in the paths of $\PP_0$
are exactly the vertices of $G$ contained inside this face of $G''$.
So, the subgraph of $G$ induced by such vertices has a component joined by an edge to each of the paths from $\PP_0$ bounding the face of $G''$.
We conclude that the obtained rooted tree-decompositions has the properties given in the statement of the theorem.
\end{proof}

\section{Upper bound}
\label{sec:upper}

We now present the asymptotically optimal upper bound on the twin-width of graphs embeddable in surfaces.

\begin{theorem}
\label{thm:main}
The twin-width of every graph $G$ of Euler genus $g\ge 1$ is at most
\[6\,\cdot\,\max\left\{3\sqrt{47g}+1,2^{24}\right\}=18\sqrt{47g}+O(1).\]
\end{theorem}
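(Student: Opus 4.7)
The plan is to apply Theorem~\ref{thm:product} and convert its output into a carefully scheduled contraction sequence. We may assume $G$ is a triangulation of a surface of Euler genus $g$, since completing $G$ to a triangulation only adds edges on the same vertex set and any contraction sequence of the triangulation yields one for $G$ whose red degree is no larger (missing edges of $G$ relative to the triangulation can be treated as red from the outset and so contribute no extra red degree in the analysis we outline). Applying Theorem~\ref{thm:product} to $G$ together with a BFS spanning tree $T$ yields a partition $\PP$ of $V(G)$ into vertical paths and a rooted tree-decomposition $\TT$ of $H:=G/\PP$ whose root bag $R$ has size $m\le\max\{8,32g-27\}$, whose root has $s\le 6\max\{1,18g-21\}$ children $\TT_1,\dots,\TT_s$, and whose non-root bags have size at most~$8$; moreover each $\TT_j$ touches at most six paths of $R$. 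Since the paths in $\PP$ are vertical and edges of $G$ connect only vertices in the same or adjacent BFS-layers, $G$ is a subgraph of $P\boxtimes H$ for a path $P$ indexed by the layers.

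My contraction sequence will have a local and a global component, interleaved. Locally, each subtree $\TT_j$ is processed independently: since all its bags have size at most~$8$, the part of $G$ on the internal paths of $\TT_j$ is contained in the strong product of $P$ with a graph of tree-width at most~$7$. A layer-by-layer contraction guided by the tree-decomposition $\TT_j$, in the spirit of the arguments used for planar graphs of bounded tree-width in~\cite{JacP22wg,HliJ23}, reduces all internal paths of $\TT_j$ to a single supervertex with red degree bounded by a universal constant $c_0$. This local processing also generates red edges toward the at most six paths of $R$ that $\TT_j$ touches; once the local processing of $\TT_j$ finishes, its supervertex is merged into one of those touched root-bag supervertices. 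Globally, root-bag path supervertices and freshly produced subtree supervertices are merged together in rounds of size $b$, to be chosen in order to balance the two $g$-dependent contributions to the red degree.

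The red-degree bookkeeping splits into three sources: a constant $c_0$ from the intra-subtree local processing; a term of order $b$ counting root-bag paths that are ``open'' in the current round, i.e.\ incident to red edges from partially processed subtrees that touch them; and a term of order $s/b$ counting subtree supervertices that have finished local processing but are waiting for a root-path neighbour whose contraction has not yet completed. Choosing $b=\Theta(\sqrt{s})=\Theta(\sqrt{g})$ balances the latter two and gives a red-degree bound of the form $C\sqrt{g}+O(1)$. Substituting the explicit bounds $m\le 32g-27$ and $s\le 108g-126$ from Theorem~\ref{thm:product}, together with the structural constants ``six paths per face'' from Lemma~\ref{lm:twout} and ``eight vertices per bag'' from Lemma~\ref{lm:tw7}, and tracking them carefully through the scheduling, is designed to produce the explicit constant $C=18\sqrt{47}$. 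The $\max$ with $2^{24}$ in the statement absorbs small values of $g$ where the asymptotic analysis does not yet dominate the universal overhead.

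The hard part will be the detailed red-degree accounting during the interleaving. Within a single subtree $\TT_j$ the local contraction must simultaneously maintain that only constantly many intra-subtree supervertices are in flight, that at most six red edges grow toward root-bag supervertices, and that no red edges persist toward previously processed subtrees. Across rounds, the scheduling of the subtrees — determined by the at most six root paths each touches — must ensure that every root path is ``open'' during only $O(b)$ consecutive rounds while keeping the number of pending subtree supervertices at $O(s/b)$. I anticipate phrasing this scheduling in terms of an ordering on the bipartite incidence graph between root-bag paths and child subtrees, extracted inductively from the structure of $\TT$; getting the constants to come out to precisely $18\sqrt{47g}+O(1)$ (rather than a looser bound) will require an especially careful tracking of the six-bounding-paths structure of $G''$ built in the proof of Theorem~\ref{thm:product}.
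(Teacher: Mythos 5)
Your high-level strategy (apply Theorem~\ref{thm:product}, then exploit the tree-width-7 structure of the subtrees, then balance a size-$\Theta(\sqrt{g})$ trade-off) is the right shape, but two concrete gaps prevent the proof from going through as written.

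First, the plan to ``reduce all internal paths of $\TT_j$ to a single supervertex'' before merging globally cannot work as stated. Such a supervertex would inherit red edges to \emph{every} vertex on the at most six bordering root-bag paths $P_a$, and those paths span arbitrarily many BFS layers, so the supervertex has unbounded red degree unless the root-bag paths have already been contracted to points themselves. But contracting a root-bag path to a point \emph{first} has the symmetric problem: that supervertex would be red-adjacent to all the subtree vertices in all layers that neighbour the path, again an unbounded quantity. The paper's Phase I avoids this bind entirely by \emph{never} contracting across layers and \emph{never} touching $P_1,\ldots,P_k$: each subtree $\TT_i$ is reduced to one vertex per layer, red degree is controlled because every vertex interacts only with its own and two adjacent layers, and the root-bag paths are contracted layer-by-layer only in Phase II, after the per-layer width has already dropped. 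Your ``local processing $\to$ single supervertex $\to$ merge into a root-bag supervertex'' pipeline collapses the layer structure too early; any fix must maintain the one-vertex-per-layer invariant globally until the very last phase.

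Second, your round-of-size-$b$ scheduling on the bipartite incidence graph between subtrees and root-bag paths is missing the key quantitative input. The balance that yields $\Theta(\sqrt{g})$ in the paper comes from the observation that the quotient $H_0$ (obtained by contracting each $P_i$ to $a_i$ and each component in $V_i$ to $b_i$) is a minor of $G_0$ and hence embeddable in the same surface, giving $|E(H_0)|\le 3(k+\ell+g)=O(g)$ and thus total degree $O(g)$. A greedy partition of $\{a_i\}\cup\{b_i\}$ into groups of degree-sum $\le 2s$ with $s=3\sqrt{47g}$ then produces at most $2s+2$ groups, and after contracting each group layer-by-layer every BFS layer carries at most $2s+2$ supervertices, from which the $6(s+1)$ bound follows directly. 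Your scheduling only uses the ``at most six touches per subtree'' side of the incidence structure; it does not bound the degrees of the $a_i$ among themselves (edges of $H_0$ between root-bag paths, which can be $\Theta(g)$ in total), and without the embeddability-of-$H_0$ argument there is no reason the number of groups and the group degree-sums can both be kept at $O(\sqrt{g})$. You also acknowledge the scheduling analysis is unresolved; the degree-sum partition of the paper is exactly the device that makes the constants come out cleanly to $6\bigl(3\sqrt{47g}+1\bigr)$.
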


\begin{proof}
Fix a graph $G$ of Euler genus $g>0$ and
let $G_0$ be any triangulation of the surface with Euler genus $g$ that
$G$ is a spanning subgraph of $G_0$, i.e., $V(G_0)=V(G)$ (to avoid
unnecessary technical issues related to adding new vertices, we permit $G_0$ to contain parallel edges).

We apply Theorem~\ref{thm:product} to the triangulation $G_0$ and an arbitrary BFS spanning tree $T_0$, and
let $\PP$ be a collection of vertical paths and a rooted tree-decomposition with the properties given in the theorem.
Let $P_1,\ldots,P_k$ be the vertical paths contained in the root bag (note that $k\le 32g$) and
let $\TT_1,\ldots,\TT_{\ell}$ be the subtrees rooted at the children of the root bag (note that $\ell\le 108g$).
Further, let $V_i$, $i\in [\ell]$, be the vertices contained in the vertical paths in the bags of the subtree $\TT_i$ that
are not contained in the root bag,
i.e., the vertices that are not contained in $V(P_1)\cup\cdots\cup V(P_k)$.
Note that for every $i=1,\ldots,\ell$, the subgraph induced by a set $V_i$ has a component that
is joined by an edge to each of the paths $P_1,\ldots,P_k$ that are contained in the subtree $\TT_i$.

Let $H_0$ be the graph obtained from $G_0$ by contracting each of the following $k+\ell$ sets to a single vertex:
$V(P_1),\ldots,V(P_k)$ and $V_1,\ldots,V_\ell$.
Let $a_1,\ldots,a_k$ and $b_1,\ldots,b_{\ell}$ be the resulting vertices.
We observe that $H_0$ can be obtained from $G_0$ by contracting edges and deleting vertices.
The vertices $a_1,\ldots,a_k$ are obtained by contracting paths $P_1,\ldots,P_k$ and
we may think of each vertex $b_i$, $i=1,\ldots,\ell$, to be obtained as follows:
first contract the component of the subgraph induced by the set $V_i$ that
is joined by an edge to each of the paths $P_1,\ldots,P_k$ contained in the subtree $\TT_i$ to a single vertex, and
then delete all the vertices of $V_i$ not contained in this component.
Since $H_0$ can be obtained from $G_0$ by contracting edges and deleting vertices,
the graph $H_0$ can be embedded in the same surface as $G_0$.
Hence,
the number of the edges of $H_0$ is at most $3(k+\ell)-6+3g\le 3(k+\ell+g)$ (the latter bound applies even if $k+\ell=2$).
Since each subtree $\TT_i$ contains at most six of the paths $P_1,\ldots,P_k$,
each of the vertices $b_1,\ldots,b_{\ell}$ has degree at most six and
all its (at most six) neighbors are among the vertices $a_1,\ldots,a_k$.

Let $s=3\sqrt{47g}$; note that $s\ge 6$.
We next split the vertices $a_1,\ldots,a_k$ into sets $A_1,\ldots,A_{k'}$ and
the vertices $b_1,\ldots,b_{\ell}$ into sets $B_1,\ldots,B_{\ell'}$ as follows;
a similar argument has also been used in~\cite{AhnHKO22}.
Keep adding the vertices $b_1,\ldots,b_k$ to the set $B_1$ as long as the sum of their degrees does not exceed $s$,
then keep adding the remaining vertices among $b_1,\ldots,b_k$ to the set $B_2$ as long as the sum of their degrees does not exceed $s$, etc.
Observe that the sum of the degrees of the vertices in each of the sets $B_1,\ldots,B_{\ell'}$ is at most $s+6\le 2s$ and
the sum of the degrees of the vertices in each of the sets $B_1,\ldots,B_{\ell'-1}$ is at least $s$.
Each of the vertices $a_1,\ldots,a_k$ with degree larger than $s$ forms a set of size one, and
the remaining vertices are split in the same way as the vertices $b_1,\ldots,b_k$.
Each of the sets $A_1,\ldots,A_{k'}$ has either size one or the sum of the degrees of its vertices is at most $2s$, and
the sum of the degrees of the vertices in each of the sets $A_1,\ldots,A_{k'-1}$ is at least $s$.
Let $H'_0$ be the graph obtained from $H_0$ by contracting the vertices
in each of the sets $A_1,\ldots,A_{k'}$ and each of the sets $B_1,\ldots,B_{\ell'}$ to a single vertex;
note that the graph $H'_0$ does not need to be embeddable in the same surface as $H_0$.
Since the sum of the degrees of the vertices $a_1,\ldots,a_k$ and $b_1,\ldots,b_{\ell}$
is at most $6(k+\ell+g)\le 846g$ (as $H_0$ has at most $3(k+\ell+g)$ edges),
we obtain that $k'+\ell'\le\frac{846g}{s}+2=2s+2$,
i.e., $H'_0$ has at most $2s+2$ vertices.

We now describe the order in which we contract the vertices of $G$, and
we analyze the described order later.
In what follows, when we say a \emph{layer},
we always refer to the layers given by the BFS spanning tree $T_0$ from the application of Theorem~\ref{thm:product}.
In particular, each vertex of $G$ is adjacent only to the vertices in its own layer and the two neighboring layers.
To make the presentation of the order of contractions clearer,
we split the contractions into three phases.

\textbf{Phase I:}
This is the most complex phase and consists of $\ell$ subphases.
In the $i$-th subphase, $i\in [\ell]$,
we contract all the vertices of the set $V_i$ that are contained in the same layer to a single vertex
in the way that we now describe.
Then, we possibly contract them to some of the vertices created in the preceding subphases,
i.e., those obtained by contracting vertices in $V_1\cup\cdots\cup V_{i-1}$.
In this phase, \emph{we never contract two vertices contained in different layers} and
no contraction involves any vertex from $V(P_1)\cup\cdots\cup V(P_k)$.

\textbf{Subphase.}
Fix $i\in [\ell]$.
Let $G_i$ be the subgraph of $G_0/\PP$ induced by the vertices contained in the bags of the subtree $\TT_i$ and
let $n'$ be the number of the paths $P_1,\ldots,P_k$ that are contained in the bags of the subtree $\TT_i$;
note that $n'\le 6$.
If the graph $G_i$ has less than $8$ vertices, we proceed directly to the conclusion of the subphase, which is described below.
If the graph $G_i$ has at least $8$ vertices,
$G_i$ is a subgraph of a spanning subgraph of a $7$-tree $G'_i$ such that
the $n'$ vertices corresponding to the paths from the set $\{P_1,\ldots,P_k\}$
are contained in the initial complete graph of $G'_i$.

Fix any order $Q_1,\ldots,Q_n$ of the vertical paths corresponding to the vertices of $G'_i$ such that
the neighbors of $Q_j$, $j\in [n]$, among $Q_1,\ldots,Q_{j-1}$ form a complete graph of order at most $7$ in $G'_i$ and
the $n'$ paths from the set $\{P_1,\ldots,P_k\}$ are the paths $Q_1,\ldots,Q_{n'}$.
Let $C_j$ be the complete subgraph of $G_i$ formed by $Q_j$ and its (at most $7$) neighbors among $Q_1,\ldots,Q_{j-1}$.
Note that the neighbors in $G$ of each vertex of a path $Q_j$, $j\in [n]$,
are contained in at most seven of the paths $Q_1,\ldots,Q_{j-1}$,
which are exactly the paths forming the complete graph $C_j$.
We define the \emph{$j$-shadow} of a vertex $v\in V_i$
to be the set of its neighbors contained in the paths $Q_1,\ldots,Q_{j-1}$.
Since every vertex of $Q_j$ has at most $21$ neighbors on the paths $Q_1,\ldots,Q_{j-1}$ (as
its neighbors must be in the same or adjacent layers),
the $j$-shadow of a vertex contained in the path $Q_j$ has at most $21$ vertices.

We now use the tree-like structure of the $7$-tree $G'_i$ to define the order of contractions of the vertices contained in $V_i$;
this part of our argument is analogous to that used in~\cite{JacP22wg}
to obtain an upper bound on twin-width of graphs with bounded tree-width.
We proceed iteratively for $j=n-1,\ldots,n'$ as follows.
At the end of the iteration for $j=n-1,\ldots,n'+1$, all vertices of $V_i$ that
\begin{itemize}
\item are contained on paths in the same component of $G'_i\setminus\{Q_1,\ldots,Q_{j-1}\}$,
\item have the same $j$-shadow, and
\item are in the same layer
\end{itemize}
will have been contracted to a single vertex.
At the end of the iteration for $j=n'$,
all vertices of $V_i$ with the same $(n'+1)$-shadow that are contained in the same layer
will have been contracted to a single vertex,
in particular,
all vertices of $V_i$ contained in the same layer will have been contracted to at most $2^{3(n'-1)}$ vertices.

Fix $j\in\{n-1,\ldots,n'\}$.
Let $m$ be the number of components of $G'_i\setminus\{Q_1,\ldots,Q_{j}\}$ that
are included in the component of $G'_i\setminus\{Q_1,\ldots,Q_{j-1}\}$ containing $Q_j$, and
let $W_1,\ldots,W_m$ be the sets of vertices obtained by contracting vertices on the paths of these $m$ components.
Note that each $W_i$ has at most $2^{21}$ vertices in each layer.
We first contract each vertex of $W_2$
to the vertex $W_1$ with the same $(j+1)$-shadow on the same layer if such vertex exists.
Next, we contract each vertices of $W_3$ to the vertex of $W_1\cup W_2$
with the same $(j+1)$-shadow on the same layer if such vertex exists, etc.
At the end of this process,
all vertices of $W_1\cup\cdots\cup W_m$ with the same $(j+1)$-shadow that are on the same layer
have been contracted to a single vertex (note that there are at most $2^{24}$ such vertices in each layer).
If $j>n'$,
we contract all resulting vertices with the same $j$-shadow that are on the same layer to a single vertex, and
subsequently,
we contract the vertex contained on the path $Q_j$ to the vertex with the same $j$-shadow on the same layer (if such vertex exists).
The description of the iteration for $j$ is now finished.

\textbf{Conclusion of subphase.}
The $i$-th subphase concludes by contracting all the vertices of $V_i$ in the same layer to a single vertex, and
if the vertex $b_i$ is not the vertex with the smallest index in the set $B_{i'}$ such that $b_i\in B_{i'}$,
then we contract the resulting vertices to those in the same layers that have been obtained in the subphases
associated with the vertices of $B_{i'}$ with indices smaller than $i$.

\begin{figure}
\begin{center}
\epsfbox{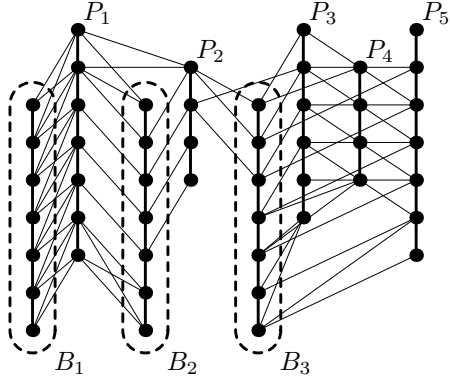}
\end{center}
\caption{An example of a graph obtained after Phase I in the proof of Theorem~\ref{thm:main} ($k=5$ and $\ell'=3$).
         The edges of vertical paths are drawn in bold.
	 Note that there are no edges between paths corresponding to the set $B_1$, $B_2$ and $B_3$.}
\label{fig:main1}
\end{figure}

\textbf{Phase II:}
The graph that we obtain after Phase I has at most $k+\ell'$ vertices in each layer:
$k$ of them corresponding to the vertices $a_1,\ldots,a_k$ of the graph $H_0$ and
the remaining $\ell'$ to the sets $B_1,\ldots,B_{\ell'}$ (see Figure~\ref{fig:main1}).
For every $i=1,\ldots,k'$,
we will contract all the vertices on the paths of $A_i$ that are in the same layer to a single vertex as follows.
Let $P_{i_1},\ldots,P_{i_n}$ be the paths corresponding to the vertices of $A_i$.
We first contract the vertices of $P_{i_1}$ and $P_{i_2}$ that are in the same layer,
proceeding from top to bottom (starting with the layer that contains both such vertices).
We then contract the vertices of $P_{i_3}$ to the vertices created previously,
again in each layer proceeding from top to bottom, then the vertices of $P_{i_4}$, etc.
At the end of this phase, we obtain a graph that is a subgraph of the strong product of a path and the graph $H'_0$.
Since the graph $H'_0$ has $k'+\ell'\le 2s+2$ vertices, each layer now contains at most $2s+2$ vertices.

\textbf{Phase III:}
We now contract all the vertices contained in the top layer to a single vertex,
then all the vertices of the next layer to a single vertex, etc.
Finally, we contract the vertices one after another to eventually obtain a single vertex,
starting with the two vertices of the top two layers,
then contracting the vertex in the third layer, etc.

\textbf{Analysis of red degrees.}
We now establish an upper bound on the maximum possible red degree of the vertices of the graphs
obtained throughout the described sequence of contractions.
We start with Phase I.
During the $i$-th subphase and the iteration for $j$,
the only new red edges ever created are among the vertices of $W_1,\ldots,W_m$ and the path $Q_j$.
Since the vertices of $W_1\cup\cdots\cup W_m$ have at most $2^{24}$ different $(j+1)$-shadows (the neighbors
in their shadows are only on the paths contained in $C_j$),
each vertex has neighbors in its and the two neighboring layers, and
we first contract all vertices of $W_1\cup W_2$ with the same $(j+1)$-shadow, then all vertices of $W_1\cup W_2\cup W_3$, etc.,
the red degree of any vertex does not exceed $2\cdot 3\cdot 2^{24}=3\cdot 2^{25}$.
We eventually arrive at having at most $2^{24}$ vertices in each layer and
so their red degrees do not exceed $3\cdot 2^{24}$.
Then, the vertices with the same $j$-shadow that are on the same layer are contracted,
which can result in the vertices of $Q_j$ (temporarily) having the red degree up to $3\cdot 2^{24}$.
At the end of iteration for $j>n'$,
there are at most $2^{21}$ vertices in each layer that have been obtained from $W_1\cup\cdots\cup W_m$ and
so the red degree of each of them is at most $3\cdot 2^{21}$.
Also note that there is no red edge between the vertices on the paths $Q_1,\ldots,Q_{j-1}$ and
the remaining vertices of $V_i$.

At the beginning of the conclusion of the subphase,
each layer has at most $2^{18}$ vertices obtained from contracting the vertices of $V_i$ (note that
this bound also holds when $G_i$ has less than eight vertices,
i.e., when we proceeded directly to the conclusion of the subphase).
The conclusion of the subphase starts with contracting these vertices to a single vertex per layer:
this can increase the red degree of at most six paths $P_1,\ldots,P_k$ and
the red degree of each vertex on the paths can increase by at most three.
When the subphase finishes,
each of the vertices contained in the paths $P_1,\ldots,P_k$ has at most $3\ell'$ red neighbors (although during the subphase it can have upto three additional red neighbors), and
each of the vertices obtained by contracting the vertices of $V_1,\ldots,V_i$ has red degree at most $6s$ (since
the sum of the degrees of the vertices in each set $B_1,\ldots,B_{\ell'}$ is at most $2s$).
In particular, the red degree of each vertex on the paths $P_1,\ldots,P_k$ never exceeds $3(\ell'+1)$.
We conclude that the red degree of none of the vertices exceeds
the largest of the following three bounds: $3\cdot 2^{25}$, $3(\ell'+1)$ and $6s$.
Moreover,
the red degree of no vertex exceeds $\max\{3\ell',6s\}\le 6s+3$ (recall that $\ell'\le 2s+1$)
at the end of each subphase (and so also at the end of Phase I).

During Phase II,
each vertex has at most $\max\{k'+\ell',2s\}$ red neighbors in its layer and in each of the neighboring layers.
Indeed,
the vertices obtained from those on the paths $P_1,\ldots,P_k$
have at most $k'+\ell'$ red neighbors in each layer (at most $k'$ neighbors
among vertices obtained from contracting vertices on the paths $P_1,\ldots,P_k$, and
there are at most $\ell'$ vertices in each layer obtained by contracting vertices not on the paths $P_1,\ldots,P_k$) and
the vertices obtained from those not on the paths $P_1,\ldots,P_k$ have at most $2s$ red neighbors in each layer as
this is simply the upper bound on the number of their neighbors on the paths $P_1,\ldots,P_k$.
Hence, the red degree of any vertex never exceeds
\[3\max\{k'+\ell',2s\}\le 3\max\{2(s+1),2s\}=6(s+1)\]
during the entire Phase II.
Finally, since the number of vertices contained in each layer at the end of Phase II is at most $k'+\ell'$,
during the entire Phase III, the red degree of no vertex exceeds $3(k'+\ell')-1$.

Hence, we have established that the red degree of no vertex exceeds $\max\left\{6(s+1),3\cdot 2^{25}\right\}$,
which implies the bound claimed in the statement of the theorem.
\end{proof}

\section{Lower bound}
\label{sec:lower}

The following can be readily obtained from the result of Ahn et al.~\cite{AhnCHKO22} on twin-width of random graphs,
however, we include a short proof for completeness.

\begin{proposition}
\label{prop:lower}
There exists a graph of Euler genus $g$ that has twin-width at least $\sqrt{3g/2}-O(g^{3/8})$.
\end{proposition}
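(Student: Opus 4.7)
The plan is to combine two results: the twin-width lower bound for Erd\H{o}s--R\'enyi random graphs due to Ahn et al.~\cite{AhnCHKO22}, and the classical Ringel--Youngs determination of the Euler genus of complete graphs. Specifically, I would take $n$ to be the largest integer for which $K_n$ embeds in a surface of Euler genus at most $g$; by Ringel--Youngs, the Euler genus of $K_n$ equals $\lceil(n-3)(n-4)/6\rceil$, so one can take $n=\sqrt{6g}-O(1)$, and then every $n$-vertex graph embeds in a surface of Euler genus at most $g$.

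Given such an $n$, the random graph $G_{n,1/2}$ has twin-width at least $n/2-O(\sqrt{n\log n})$ asymptotically almost surely by~\cite{AhnCHKO22}; in particular, at least one $n$-vertex graph with this twin-width lower bound exists. Being a subgraph of $K_n$, it embeds in the chosen surface. The claimed bound then reduces to the arithmetic estimate
\[
\frac{n}{2}-O(\sqrt{n\log n})=\frac{\sqrt{6g}}{2}-O\bigl((6g)^{1/4}\sqrt{\log(6g)}\bigr)=\sqrt{3g/2}-O(g^{3/8}),
\]
where the final step uses $g^{1/4}\sqrt{\log g}=O(g^{3/8})$.

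I do not anticipate any real obstacle: the statement is essentially the concatenation of two quotable theorems with a brief arithmetic simplification. The only minor point worth being careful about is that the extraction of an explicit witness graph is via the probabilistic method applied to $G_{n,1/2}$, which is standard. If one preferred to avoid $\log$ factors entirely in the error term, a deterministic construction (for instance, a Paley graph of comparable order, for which strong twin-width lower bounds are known) could be substituted, but the random graph bound is more than sufficient for the claimed $O(g^{3/8})$ error.
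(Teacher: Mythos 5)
Your proposal is correct, and the overall strategy---take $n\approx\sqrt{6g}$ via Ringel--Youngs, embed an $n$-vertex random graph, and convert $n/2-o(n)$ into $\sqrt{3g/2}-o(\sqrt{g})$---is exactly the strategy the paper uses. The one genuine difference is in the twin-width lower bound for $G_{n,1/2}$ itself: you quote the theorem of Ahn et al.~\cite{AhnCHKO22} as a black box, whereas the paper deliberately gives a self-contained argument (which is why the introduction says a short proof is ``given for completeness''). The paper's argument is quite elementary: by two Chernoff-bound estimates it arranges that, with positive probability, every vertex has degree $\frac{n-1}{2}\pm n^{3/4}$ and every pair of vertices has $\frac{n-2}{4}\pm n^{3/4}$ common neighbours, and then observes that the very \emph{first} contraction of any pair already creates a vertex of red degree at least $\tfrac{n}{2}-2-4n^{3/4}$. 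This avoids invoking the full (and harder) theorem that the entire contraction sequence keeps red degree high. Both routes give the claimed bound; your error term $O(\sqrt{n\log n})=O(g^{1/4}\sqrt{\log g})$ is in fact slightly smaller than the paper's $O(n^{3/4})=O(g^{3/8})$, but both are absorbed by $O(g^{3/8})$. One small point to tidy up: the formula $\lceil(n-3)(n-4)/6\rceil$ for the Euler genus of $K_n$ has the well-known exception $K_7$ on the Klein bottle, which the paper sidesteps by assuming $g>2$; you should do the same or phrase the embedding claim as an inequality.
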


\begin{proof}
Fix $g>2$. It is well-known~\cite{RinY68} that the complete graph of order $n$ where $n$ is equal to the Heawood number,
i.e.,
\[n=\left\lfloor\frac{7+\sqrt{1+24g}}{2}\right\rfloor=\sqrt{6g}+O(1)\]
can be embedded in any surface of Euler genus $g$ (the reason why we excluded $g=2$ is that
this is not true for the Klein bottle).
Let $G$ be the $n$-vertex Erd\H os-R\'enyi random graph for $p=1/2$,
i.e., the graph $G$ such that any pair of vertices of $G$
is joined by an edge with probability $1/2$ independently of other pairs.
The probability that the degree of a particular vertex differs from $(n-1)/2$ by more than $n^{3/4}$
is at most $2e^{-\frac{2n^{3/2}}{n-1}}$ by the Chernoff Bound, and
the probability that the number of common neighbors of any particular pair of vertices differs from $(n-2)/4$ by more than $n^{3/4}$
is at most $2e^{-\frac{2n^{3/2}}{n-2}}$.
Hence, the probability that the degree of each vertex is between $\frac{n-1}{2}-n^{3/4}$ and $\frac{n-1}{2}+n^{3/4}$ and
that the number of common neighbors of each pair of vertices is between $\frac{n-2}{4}-n^{3/4}$ and $\frac{n-2}{4}+n^{3/4}$
is at least
\[1-2ne^{-\frac{2n^{3/2}}{n-1}}-2\binom{n}{2}e^{-\frac{2n^{3/2}}{n-2}}.\]
Assume that $n$ is sufficiently large that this probability is positive and
fix such an $n$-vertex graph $G$.
Clearly, $G$ can be embedded in any surface of Euler genus $g$ as it is a subgraph of $K_n$.
On the other hand, the contraction of any pair of vertices results in a vertex with red degree at least
\[2\frac{n-1}{2}-2n^{3/4}-2\frac{n-2}{4}-2n^{3/4}-2=\frac{n}{2}-2-4n^{3/4}=\sqrt{3g/2}-O(g^{3/8}).\]
The statement of the proposition now follows.
\end{proof}

\section*{Acknowledgement}

The substantial part of the work presented in this article was done during the Brno--Koper
research workshop on graph theory topics in computer science held in Kranjska Gora in April 2023,
which all three authors have participated in.

\bibliographystyle{bibstyle}
\bibliography{twing}

\end{document}